\def\ulL{{\underline{L}}}
\def\ulM{{\underline{M}}}
\def\ulN{{\underline{N}}}
\def\ulZ{{\underline{Z}}}
\def\ulX{{\underline{X}}}
\def\ulY{{\underline{Y}}}
\def\ulW{{\underline{W}}}
\def\ulV{{\underline{V}}}
\def\ulU{{\underline{U}}}
\def\ARSX{\Sigma _X}
\def\ARTX{{\underline{\Sigma _X}}}
\def\ulSigma{{\underline{\Sigma}}}
\def\ulf{{\underline{f}}}
\def\ulg{{\underline{g}}}
\def\ulh{{\underline{h}}}
\def\ulw{{\underline{w}}}
\def\uldeltaMN{\delta _{\ulM , \ulN }}
\def\SCM{{\underline{\mathrm{CM}}}}
\def\SCMR{{\underline{\mathrm{CM}}}(R)}
\def\SHom{{\underline{\mathrm{Hom}}}}
\def\SHomR{{\underline{\mathrm{Hom}}_R}}
\def\SEnd{{\underline{\mathrm{End}}}}
\def\homo{\leq _{\underline{hom}}}
\def\sto{\leq _{st}}
\def\trio{\leq _{tri}}
\def\CMR{\mathrm{CM}(R)}
\def\indCMR{\mathrm{indCM}(R)}
\def\modC{\mathrm{mod}(\CMR)}
\def\SmodC{{\underline{\mathrm{mod}}}(\CMR)}
\def\GR{\mathrm{G}(\CMR )}
\def\GSR{\mathrm{G}(\SCMR )}
\def\Hom{\mathrm{Hom}}
\def\HomR{\mathrm{Hom}_R}
\def\End{\mathrm{End}}
\def\Ext{\mathrm{Ext}}
\def\Im{\mathrm{Im}}
\def\Z{\mathbb Z}
\def\m{\mathfrak m}
\def\p{\mathfrak p}
\def\T{\mathcal T}
\def\FF{\mathcal F} 
\def\GG{\mathcal G} 
\def\II{\mathcal I} 
\def\JJ{\mathcal J} 
\def\KK{\mathcal K} 
\newtheorem{theorem}{Theorem}[section]
\newtheorem{lemma}[theorem]{Lemma}
\newtheorem{corollary}[theorem]{Corollary}
\newtheorem{proposition}[theorem]{Proposition}
\theoremstyle{definition}
\newtheorem{definition}[theorem]{Definition}
\newtheorem{example}[theorem]{Example}
\theoremstyle{remark}
\newtheorem{remark}[theorem]{Remark}
\numberwithin{equation}{section}
\begin{document}

\title{On the stable hom relation and stable degenerations of Cohen-Macaulay modules}
\author{Naoya Hiramatsu}
\dedicatory{Dedicated to Professor Yuji Yoshino on the occasion of his sixtieth birthday.} 
\address{Department of general education, National Institute of Technology, Kure College, 2-2-11, Agaminami, Kure Hiroshima, 737-8506 Japan}
\email{hiramatsu@kure-nct.ac.jp}
\thanks{This work was supported by JSPS Grant-in-Aid for Young Scientists (B) 15K17527. }
\subjclass[2000]{Primary 16W50 ; Secondary  13D10}
\date{\today}
\keywords{stable degeneration, Cohen-Macaulay module, stable category}

\begin{abstract}
We study the stable hom relation for Cohen-Macaulay modules over Gorenstein local algebras. 
We give the sufficient condition to make the stable hom relation a partial order when the base algebra is of finite representation type. 
As an application, we give the description of stable degenerations of Cohen-Macaulay modules over simple singularities of several types by using the stable hom relation. 
\end{abstract}

\maketitle

\section{Introduction} 
In ring theory, the hom relation is a basic relation for finitely generated modules over finite dimensional algebras \cite{A82, AR85, R86, Z99, Y02, JSZ05}. 
Let $k$ be a field and $R$ a $k$-algebra. 
The relation is defined by a dimension of a hom-set between finitely generated modules as a $k$-module, that is, we define the relation $M \leq _{hom} N$ by a relation $\dim _k \Hom _R (X, M) \leq \dim _k \Hom_R (X, N)$ for each finitely generated modules $X$.  
Auslander-Reiten \cite{AR85} use the relation to investigate when indecomposable modules are determined by the composition factors. 
In \cite{Z99}, Zwara gave a characterization of degenerations of modules over representation finite algebras in relation with the hom relation. 
We remark that the hom relation is not always a partial order. 
It has been studied by many authors \cite{A82, AR85, B89, Y02} when the relation is actually a partial order.   

In the paper, we investigate the hom relation on a stable category of Cohen-Macaulay modules $\SCMR$ over (not necessary Artinian) Gorenstein $k$-algebra.  
First we compare the Auslander-Reiten theory on $\CMR$ with that on $\SCMR$.  
We look into the relation between AR sequences and AR triangles of Cohen-Macaulay modules (Proposition \ref{AR triangle}). 
We consider a relation on $\SCMR$ which is the stable analogue of the hom relation (Definition \ref{dfn of hom order}) and shall show that it is actually a partial order if the algebra is of finite representation type with certain assumptions (Theorem \ref{cd}). 
In Section \ref{stable degenerations of Cohen-Macaulay modules}, we attempt to characterize the stable degenerations of Cohen-Macaulay modules by using the stable hom relation. 
The concept of stable degenerations of Cohen-Macaulay modules was introduced by Yoshino \cite{Y11}.  
It is closely related to ordinary degenerations of modules \cite{HY13, SZ14}.  
We shall show that the stable degenerations over several simple singularities can be controlled by the stable hom relation (Theorem \ref{Main Theorem}). 
To show this, we use the stable analogue of the argument over finite dimensional algebras in \cite{Z99}. 
As a conclusion, we give the description of stable degenerations of Cohen-Macaulay modules over simple singularities of type ($A_n $) (Theorem \ref{simple singularity}). 

The stable hom relation has been studied by Auslander-Reiten \cite{AR85} and they also considered when the relation is a partial order. 
But the techniques in this paper are different from them because they used the fact that the ordinary (not stable) hom relation is a partial order. 
In our setting, the hom-set does not always have finite dimension, so that we can not apply their argument. 

\section{Stable hom relation on Cohen-Macaulay modules}\label{stable hom relation}

Throughout the paper $R$ is a commutative complete Gorenstein local $k$-algebra where $k$ is an algebraically closed field of characteristic $0$. 
For a finitely generated $R$-module $M$, we say that $M$ is a Cohen-Macaulay $R$-module if 
$$
\Ext_{R}^{i} (M, R) = 0 \quad \text{for any } i > 0.  
$$
We denote by $\CMR$ the category of Cohen-Macaulay $R$-modules with all $R$-homomorphisms.
We also denote by $\SCMR$ the stable category of $\CMR$. 
The objects of $\SCMR$ are the same as those of $\CMR$, and the morphisms of $\SCMR$ are elements of $\SHom _R(M, N) = \Hom _R(M, N)/P(M, N)$ for $M, N \in \SCMR$, where $P(M, N)$ denote the set of morphisms from $M$ to $N$ factoring through free $R$-modules. 
We write $\SHom _R (M, N)$ for $\Hom _{\SCMR} (M, N)$. 
For a Cohen-Macaulay module $M$, denote by $\ulM$ to indicate that it is an object of $\SCMR$. 
For a finitely generated $R$-module $M$, take a free resolution 
$$
\cdots \rightarrow F_1 \xrightarrow{d} F_0 \rightarrow M \rightarrow 0.
$$
We denote $\Im (d)$ by $\Omega M$. 
We also denote $\mathrm{Coker} (\HomR (d, R))$ by $\mathrm{Tr} M$, which is called an Auslander transpose of $M$. 
We note that the functor $\Omega$ defines a functor giving an auto-equivalence on $\SCMR$.
It is known that $\SCMR$ has a structure of a triangulated category with the suspension functor defined by the functor $\Omega$.  
See \cite[Chapter 1]{H} for details. 
Since $R$ is Gorenstein, by the definition of a triangle, $ \ulL \to \ulM \to \ulN \to \ulL [1]$ is a triangle in $\SCMR$ if and only if there is an exact sequence $0 \to L \to M' \to N \to 0$  in  $\CMR$ with $\ulM ' \cong \ulM$ in $\SCMR$, that is, $M'$ is isomorphic to $M$ up to free summand. 
Since $R$ is complete, $\CMR$, hence $\SCMR$, is a Krull-Schmidt category, namely each object can be decomposed into indecomposable objects up to isomorphism uniquely.

In the paper we use the theory of Auslander-Reiten (abbr. AR) sequences and triangles of Cohen-Macaulay modules. 
Let us recall the definitions of those notions. 
See \cite{Y} for AR sequences and \cite{H, RV02} for AR triangles.

\begin{definition} 
Let $X$, $Y$ and $Z$ be Cohen-Macaulay $R$-modules.  
\begin{itemize}
\item[(1)] A short exact sequence $\ARSX : 0 \to Z \to Y \xrightarrow{f} X \to 0 $ is said to be an AR sequence ending in $X$ (or starting from $Z$) if it satisfies
\begin{itemize}
\item[(AR1)] $X$ and $Z$ are indecomposable.  

\item[(AR2)] $\ARSX$ is not split. 

\item[(AR3)] If $g : W \to X$ is not a split epimorphism, then there exists $h : W \to Y$ such that $g = f \circ h$.  
\end{itemize}
\item[(2)] We also say that a triangle $\ARTX : \ulZ \to \ulY \xrightarrow{\ulf} \ulX \xrightarrow{\ulw} \ulZ [1] $ is an AR triangle ending in $\ulX$ (or starting from $\ulZ$) if it satisfies
\begin{itemize}
\item[(ART1)] $\ulX$ and $\ulZ$ are indecomposable. 

\item[(ART2)] $\ulw \not=0$. 

\item[(ART3)] If $\ulg : \ulW \to \ulX$ is not a split epimorphism, then there exists $\ulh : \ulW \to \ulY$ such that $\ulg = \ulf \circ \ulh$.  
\end{itemize}
\end{itemize}
\end{definition}  

\begin{proposition}\label{AR triangle}
Let $\ARSX:   0 \to Z \to Y \xrightarrow{f} X \to 0 $ be an AR sequence ending in $X$. 
Then $\ARTX: \ulZ \to \ulY \xrightarrow{\ulf} \ulX \xrightarrow{\ulw} \ulZ [1] $ is an AR triangle ending in $\ulX$. 
\end{proposition}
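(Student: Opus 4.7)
The plan is to verify the three axioms (ART1), (ART2), (ART3) directly, using the standard construction that realises the short exact sequence $\ARSX$ as a triangle in the triangulated structure on $\SCMR$ recalled in the excerpt.

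For (ART1), I would first observe that neither $X$ nor $Z$ can be a free $R$-module. Indeed, if $X$ were free (hence $\cong R$ since it is indecomposable), then the surjection $f:Y\to X$ would split because $R$ is projective, contradicting (AR2); a dual argument handles $Z$ (or one uses that, over a Henselian Gorenstein local ring, the AR-translate of an indecomposable non-free module is again indecomposable non-free). Consequently $\ulX$ and $\ulZ$ are nonzero indecomposable objects of the Krull--Schmidt category $\SCMR$, which is (ART1).

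The main obstacle is (ART2), since $\ulw=0$ is a statement at the stable level while (AR2) only says that $\ARSX$ does not split in $\CMR$. My approach is the following lifting argument. Suppose for contradiction that $\ulw=0$; then by a standard triangulated-category argument $\ulf$ is a split epimorphism in $\SCMR$, so there exists $\uls:\ulX\to\ulY$ with $\ulf\circ\uls=1_{\ulX}$. Pick a representative $s:X\to Y$. By definition of $\SHom$ we have $fs-1_X=\alpha\beta$ for some factorisation $X\xrightarrow{\beta}P\xrightarrow{\alpha}X$ through a free module $P$. Now use that $f$ is surjective and $P$ is projective to lift $\alpha$ to $\widetilde\alpha:P\to Y$ with $f\widetilde\alpha=\alpha$, and set $s':=s-\widetilde\alpha\beta:X\to Y$. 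A direct check gives $fs'=1_X$, so $\ARSX$ splits in $\CMR$, contradicting (AR2). This yields $\ulw\neq 0$.

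Finally, for (ART3) I would start with $\ulg:\ulW\to\ulX$ not a split epimorphism in $\SCMR$ and choose any representative $g:W\to X$. If $g$ were a split epimorphism in $\CMR$ with section $\sigma$, then $\uls:=\underline{\sigma}$ would satisfy $\ulg\circ\uls=1_{\ulX}$, contradicting the assumption on $\ulg$. Hence $g$ fails to be a split epimorphism in $\CMR$, so (AR3) applies and provides $h:W\to Y$ with $g=f\circ h$; passing to $\SCMR$ gives $\ulg=\ulf\circ\ulh$, as required. The only nonformal input is the surjectivity of $f$ used in the proof of (ART2); everything else is a routine transfer between $\CMR$ and $\SCMR$.
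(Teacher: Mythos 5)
Your proof is correct, and for (ART1) and (ART3) it follows the same routine transfer between $\CMR$ and $\SCMR$ as the paper (the paper dismisses (ART1) as obvious; your observation that $X$ and $Z$ cannot be free, so that $\ulX$ and $\ulZ$ are nonzero indecomposables, is the content behind that remark). The genuine difference is in (ART2). The paper argues as follows: if $\ulw=0$ then $\ulf$ admits a stable section $\ulg$ with $\ulf\circ\ulg=1_{\ulX}$; since $\Sigma_X$ does not split and $X$ is indecomposable, any lift satisfies $f\circ g\in\mathrm{rad}\,\End_R(X)$, whose image lands in $\mathrm{rad}\,\SEnd_R(X)$, contradicting $\ulf\circ\ulg=1_{\ulX}$. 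You instead run a lifting argument: correct the representative $s$ of the stable section by a map factoring through a free module, using surjectivity of $f$ and projectivity of $P$, to produce an honest section $s'$ of $f$ in $\CMR$, contradicting (AR2) directly. Both are valid; the paper's version leans on the Krull--Schmidt/local-endomorphism-ring structure of $\CMR$, while yours proves the stronger and independently useful fact that a morphism in $\CMR$ which is a split epimorphism onto a Cohen--Macaulay module in the stable category is already a split epimorphism in $\CMR$ (provided it is surjective), and in particular does not need $X$ to be indecomposable at that step.
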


\begin{proof}
We shall show that $\ulSigma$ satisfies (ART1), (ART2) and (ART3). 

\noindent
(ART1) It is obvious. 

\noindent
(ART2) If $\ulw$ is zero, then $\ulSigma$ is split. 
Thus there exists $\ulg : \ulX \to \ulY$ such that $\ulf \circ \ulg = 1_{\ulX}$. 
Note that $f \circ g \in \mathrm{rad} \End _R (X)$ since $\Sigma$ is not split. 
It yields that $\ulf \circ \ulg = 1_{\ulX} \in \mathrm{rad} \SEnd _R (X)$. 
This is a contradiction and $\ulw$ must be non zero.   

\noindent
(ART3) Let $\ulg : \ulW \to \ulX$ be not a split epimorphism. 
Then $g : W \to X$ is also not a split epimorphism. 
By (AR3), we have a morphism $h : W \to Y$ such that $f \circ h = g$. 
We conclude that $\underline{f} \circ \underline{h} = \ulg$.  
\end{proof}

We say that $\CMR$ (resp. $\SCMR$) admits AR sequences (resp. AR triangles) if there exists an AR sequence (resp. AR triangle) ending in $X$ (resp. $\ulX$) for each indecomposable Cohen-Macaulay $R$-module $X$ which is not free. 
We also say that $(R, \m)$ is an isolated singularity if each localization $R_{\p }$ is regular for each prime ideal $\p$ with $\p \not= \m$. 
If $R$ is an isolated singularity, $\CMR$ admits AR sequences (cf. \cite[Theorem 3.2]{Y}). 
As a corollary of Proposition \ref{AR triangle}, $\SCMR$ admits AR triangles if $R$ is an isolated singularity.

\begin{corollary}\label{correspondence}
If $R$ is an isolated singularity, we have an 1-1 correspondence between the set of isomorphism classes of AR sequences in $\CMR$ and that of AR triangles in $\SCMR$. 
\end{corollary}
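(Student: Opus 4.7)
The plan is to identify both sides with the set of isomorphism classes of indecomposable non-free Cohen--Macaulay modules, via the ``end term'' map, and to observe that the assignment from Proposition \ref{AR triangle} realizes the resulting bijection.

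First I would set up the bijection on the $\CMR$ side. If $\ARSX : 0 \to Z \to Y \to X \to 0$ is an AR sequence, then $X$ is an indecomposable non-free CM module (the identity of a free indecomposable is a non-split epimorphism, and it cannot factor through the middle term of a non-split sequence, contradicting (AR3)). Since $R$ is an isolated singularity, $\CMR$ admits AR sequences, and the standard uniqueness theorem for AR sequences in a Krull--Schmidt setting says that $\ARSX$ is determined up to isomorphism by $X$. Thus $[\ARSX] \mapsto [X]$ gives a bijection between isomorphism classes of AR sequences in $\CMR$ and isomorphism classes of indecomposable non-free modules in $\CMR$.

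Next I would set up the analogous bijection on the $\SCMR$ side. By Proposition \ref{AR triangle}, $\SCMR$ admits AR triangles; the standard uniqueness theorem for AR triangles in a Krull--Schmidt triangulated category says an AR triangle is determined up to triangle-isomorphism by its target, so $[\ulSigma] \mapsto [\ulX]$ is a bijection from isomorphism classes of AR triangles in $\SCMR$ to isomorphism classes of indecomposable objects of $\SCMR$. Because the kernel $P(X,X)$ of $\End _R(X) \twoheadrightarrow \SEnd _R(X)$ is contained in the Jacobson radical of the local ring $\End _R(X)$ whenever $X$ is a non-free indecomposable CM module, the functor $X \mapsto \ulX$ induces a bijection between isomorphism classes of indecomposable non-free CM modules and isomorphism classes of indecomposables of $\SCMR$ (the free indecomposable $R$ collapsing to zero).

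Composing these two bijections gives a canonical bijection between the two sides, and Proposition \ref{AR triangle} shows that this bijection is realized by the assignment $[\ARSX] \mapsto [\ARTX]$: an isomorphism of short exact sequences descends to an isomorphism of distinguished triangles in $\SCMR$, so the map is well defined, and by construction it sits in a commutative triangle with the two end-term maps. The only mildly delicate point --- not deep, but worth spelling out --- is the verification that passage to $\SCMR$ preserves and reflects indecomposability and isomorphism for non-free CM modules, which reduces to the lifting of idempotents in the local ring $\End _R(X)$ modulo its ideal $P(X,X)$.
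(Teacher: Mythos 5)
Your proposal is correct and takes essentially the same route as the paper: both define the map via Proposition \ref{AR triangle} and then invoke the uniqueness of the AR sequence (resp.\ AR triangle) ending at a given indecomposable to get bijectivity, your version merely making explicit the intermediate identification of both sides with isomorphism classes of non-free indecomposables. One parenthetical slip worth fixing: the identity of a free indecomposable is a \emph{split} epimorphism, so your stated reason that AR sequences end at non-free modules is garbled --- the correct (standard) argument is that a surjection onto a projective module splits, contradicting (AR2) --- but this does not affect the structure of the proof.
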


\begin{proof}
According to Proposition \ref{AR triangle}, we can define the mapping from the set of AR sequences to the set of AR triangles. 
Note that AR triangles (resp. AR sequences) ending in $\ulX$ (resp. $X$) are unique up to isomorphism of triangles (resp. sequences) for a given indecomposable $\ulX$ (resp. $X$) (see \cite{H, RV02}). 
Hence it follows from Proposition \ref{AR triangle} that the mapping is surjective. 
The injectivity of the mapping is clear. 
\end{proof}
By virtue of the lemma below, we see that $\SHom _R (M, N)$ has finite dimension over $k$ for $M$, $N \in \CMR$ if $R$ is an isolated singularity.

\begin{lemma}\label{stable hom}\cite[Lemma 3.9]{Y}
Let $M$ and $N$ be finitely generated $R$-modules. 
Then we have a functorial isomorphism
$$
\SHom _R (M, N) \cong \mathrm{Tor}_1 ^R (\mathrm{Tr} M, N).
$$ 
\end{lemma}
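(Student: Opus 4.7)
The plan is to derive the isomorphism directly from the defining exact sequence of the Auslander transpose. First I would fix a finite free presentation $F_1 \xrightarrow{d} F_0 \to M \to 0$ and, following Auslander, set $\mathrm{Tr}\, M := \mathrm{coker}(d^\ast)$ where $d^\ast = \Hom_R(d, R)$. Applying $\Hom_R(-, R)$ to the presentation produces the four-term exact sequence
\[
0 \to M^\ast \to F_0^\ast \xrightarrow{d^\ast} F_1^\ast \to \mathrm{Tr}\, M \to 0,
\]
which I would split at $K := \Im(d^\ast)$ into two short exact sequences
\[
0 \to M^\ast \to F_0^\ast \to K \to 0, \qquad 0 \to K \to F_1^\ast \to \mathrm{Tr}\, M \to 0.
\]

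Next I would tensor both sequences with $N$ and use the vanishing $\mathrm{Tor}_1^R(F_i^\ast, N) = 0$ for free $F_i^\ast$. The second sequence then yields $\mathrm{Tor}_1^R(\mathrm{Tr}\, M, N) = \ker(K \otimes_R N \to F_1^\ast \otimes_R N)$, while the first identifies $K \otimes_R N$ with $\mathrm{coker}(M^\ast \otimes_R N \to F_0^\ast \otimes_R N)$. Invoking the natural isomorphism $F_i^\ast \otimes_R N \cong \Hom_R(F_i, N)$ for finitely generated free $F_i$, this realises $\mathrm{Tor}_1^R(\mathrm{Tr}\, M, N)$ as the subquotient
\[
\ker\!\bigl(\Hom_R(F_0, N) \to \Hom_R(F_1, N)\bigr) \Big/ \Im\!\bigl(M^\ast \otimes_R N \to \Hom_R(F_0, N)\bigr).
\]
Left exactness of $\Hom_R(-, N)$ applied to the presentation of $M$ identifies the numerator with $\Hom_R(M, N)$, so the task reduces to matching the denominator with $P(M, N)$.

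The step I expect to require the most care is precisely this last identification. I would argue that a homomorphism $f \colon M \to N$ factors through a finite free module if and only if the composition $F_0 \to M \xrightarrow{f} N$ lies in the image of $M^\ast \otimes_R N \to \Hom_R(F_0, N)$. The easy direction follows because a pure tensor $\varphi \otimes n$ with $\varphi \in M^\ast$ produces exactly the composition $M \xrightarrow{\varphi} R \xrightarrow{\cdot\, n} N$ precomposed with $F_0 \to M$; for the converse, surjectivity of $F_0 \to M$ lets me descend any factorization of $f \circ (F_0 \to M)$ through a free module to a factorization of $f$ itself. Combining this with the subquotient description above yields $\mathrm{Tor}_1^R(\mathrm{Tr}\, M, N) \cong \Hom_R(M, N)/P(M, N) = \SHom_R(\ulM, \ulN)$, and functoriality in both variables is automatic from the naturality of all constructions used, together with the standard fact that $\mathrm{Tr}\, M$ is independent of the chosen presentation up to free summands.
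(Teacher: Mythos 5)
Your argument is correct and is essentially the standard proof of this fact; the paper itself gives no proof but simply cites \cite[Lemma 3.9]{Y}, where the same computation (dualize a finite free presentation, split at the image of $d^\ast$, tensor with $N$, and identify the image of $M^\ast\otimes_R N\to\Hom_R(M,N)$ with the maps factoring through free modules) is carried out. The only point worth making explicit is that a map from the finitely generated module $M$ into an arbitrary free module lands in a finitely generated free direct summand, so $P(M,N)$ agrees with the maps factoring through finite free modules.
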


In what follows, we always assume that $R$ is an isolated singularity, and then the following definition makes sense.

\begin{definition}\label{dfn of hom order}
For $M$, $N \in \CMR$ we define $\ulM \homo \ulN$ if $[\ulX, \ulM] \leq [\ulX, \ulN]$ for each $\ulX \in \SCMR$. 
Here $[\ulX, \ulM] $ is an abbreviation of $\dim_k \SHom _R (X, M)$.  
\end{definition}

For an AR triangle $\ulZ \to \ulY \to \ulX \to \ulZ [1]$, we denote $\ulZ$ (resp. $\ulX$) by $\tau \ulX$ (resp. $\tau ^{-1} \ulZ$). 
For an AR sequence $0 \to Z \to Y \to X \to 0$, $Z$ (resp. $X$) is also denoted by $\tau X$ (resp. $\tau ^{-1} Z$) (see \cite[Definition 2.8]{Y}.). 
By Proposition \ref{AR triangle}, $\tau \ulX \cong \underline{\tau X}$ for each indecomposable Cohen-Macaulay $R$-module $X$.

\begin{remark}\label{Serre functor}
Reiten and Van den Bergh \cite{RV02} show that a Hom-finite $k$-linear triangulated category $\T$ admits AR triangles if and only if $\T$ has a Serre functor. 
We can show that $\SCMR$ is also a Hom-finite triangulated category which has a Serre functor if $R$ is an isolated singularity. 
Actually $\SCMR$ has a Serre functor $\underline{\tau (- )}[1]$ (cf. \cite[Lemma 3.10]{Y} ). 
Note that $\underline{\Omega X} \cong \ulX [-1]$ and $\underline{\tau X} \cong \underline{\Omega ^{2-d} X} \cong \ulX [d-2]$ where $d = \dim R$. 
Hence we have $\underline{\tau (- )}[1] \cong (-)[d-1]$. 
See also \cite[Corollary 2.5.]{IT10}. 
\end{remark}

Now let us consider the full subcategory of the functor category of $\CMR$ which is called the Auslander category. 
We give a brief review of the Auslander category (see \cite[Chapter 4 and 13]{Y} for details). 
The Auslander category $\modC$ is the category whose objects are finitely presented contravariant additive functors from $\CMR$  to the category of abelian groups and whose morphisms are natural transformations between functors. 
The following lemma is a key of our result in this section. 
For an additive subcategory $\mathcal{A}$ of an abelian category, which is skeletally small and closed under extensions, we denote by $K_0 (\mathcal{A} )$ the Grothendieck group of $\mathcal{A}$.

\begin{lemma}\cite[Theorem 13.7]{Y}\label{cartan map}
The group homomorphism 
$$
\gamma : \GR \to K_{0}(\modC ),  
$$
defined by $\gamma (M) =[ \Hom _R ( \ ,M)]$  for $M \in \CMR$, is injective. 
Here $\GR$ is a free abelian group $\bigoplus \ \Z \cdot X$, where $X$ runs through all isomorphism classes of indecomposable objects in $\CMR$.  
\end{lemma}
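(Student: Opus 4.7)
The plan is to construct enough additive invariants on $K_0(\modC)$ to separate the images $\gamma(X) = [\Hom_R(-, X)]$ of distinct indecomposable Cohen-Macaulay modules. I would first invoke the classical description of the Auslander category: the representable functors $\Hom_R(-, X)$, as $X$ ranges over indecomposable objects of $\CMR$, form a complete list of indecomposable projectives in $\modC$, each with a unique simple top $S_X = \Hom_R(-, X)/\mathrm{rad}(-, X)$, and every simple object of $\modC$ is isomorphic to some $S_X$.

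The main computational input is that, for each non-free indecomposable $X \in \CMR$, the AR sequence $0 \to \tau X \to E \to X \to 0$ produces the minimal projective resolution
$$0 \to \Hom_R(-, \tau X) \to \Hom_R(-, E) \to \Hom_R(-, X) \to S_X \to 0$$
in $\modC$. This yields the identity
$$[S_X] = \gamma(X) - \gamma(E) + \gamma(\tau X)$$
in $K_0(\modC)$, showing that each simple class $[S_X]$ is expressible through images of $\gamma$ and that the passage between $\{[S_X]\}_X$ and $\{\gamma(X)\}_X$ is controlled entirely by the AR quiver.

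Next I would argue that every object of $\modC$ has finite length, so that $K_0(\modC)$ is the free abelian group on $\{[S_X]\}_X$ and the composition multiplicity of each simple $S_Y$ defines a well-defined additive functional $\ell_Y : K_0(\modC) \to \Z$. The Cartan-type matrix $C = (\ell_Y(\gamma(X)))_{X, Y}$ is then inverted over $\Z$ by the matrix encoded in the AR identity above, so $C$ is unimodular; consequently the $\gamma(X)$'s are $\Z$-linearly independent in $K_0(\modC)$, yielding injectivity of $\gamma$. The main technical obstacle is the finite-length assertion for objects of $\modC$: in the finite-representation-type setting relevant to this paper it is immediate, but in general one must verify functor-by-functor that each $\Hom_R(-, M)$ has only finitely many composition factors, which follows from the two-step projective resolution above together with the noetherianity of $R$.
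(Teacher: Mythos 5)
The paper offers no proof of this lemma at all: it is imported verbatim from Yoshino's book \cite[Theorem 13.7]{Y}. So the only question is whether your argument stands on its own, and it does not. The failure is exactly at the point you yourself flag as ``the main technical obstacle'': the assertion that every object of $\modC$ has finite length is false whenever $\dim R \geq 1$, which is the situation this paper actually needs (e.g. $R = k[[x,y]]/(x^{n+1}+y^2)$). Every simple object of $\modC$ is of the form $S_X$ with $\dim_k S_X(Y) < \infty$ for all $Y$, so if $\Hom _R(-,M)$ had a finite composition series then $\Hom _R(R,M) = M$ would be a finite-dimensional $k$-vector space; this fails for every nonzero maximal Cohen-Macaulay module over a positive-dimensional ring. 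Consequently $K_0(\modC)$ is not free on the classes of the simples, the composition-multiplicity functionals $\ell_Y$ are not defined on the classes $[\Hom _R(-,X)]$ (the would-be Cartan entries such as $\ell_{S_R}(\gamma(R))$ are infinite), and the unimodularity argument never gets started. The two-step projective resolution cannot rescue this, because the representable functors appearing in it are precisely the objects of infinite length; noetherianity of $R$ is irrelevant here. Note that this is exactly why the paper is careful to restrict to the subcategory $\SmodC$ of functors vanishing on $R$ before invoking finite length and the filtration by the $S_X$ (the statement quoted from \cite[(13.7.4)]{Y} concerns $\SmodC$, not $\modC$).

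Your argument is the standard one for artinian algebras and would be correct if $R$ were artinian (modulo one small repair: the free module $R$ admits no AR sequence, so $S_R$ needs the separate resolution by $\Hom _R(-,\mathrm{rad}\, R) \to \Hom _R(-,R)$ rather than an AR-sequence resolution). In positive dimension a genuinely different mechanism is required, and the cited proof in \cite{Y} does not proceed through composition series of representable functors. If you want a self-contained argument in the spirit of yours, you must either (a) work only with invariants that make sense for infinite-length functors and are still additive on short exact sequences, or (b) follow Yoshino and reduce the injectivity statement by other means (his Chapter 13 argument is an induction on dimension, not a Cartan-matrix computation). As written, the proposal proves the lemma only for $\dim R = 0$.
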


We denote by $\SmodC$ the full subcategory $\modC$ consisting of functors $F$ with $F(R )= 0$. 
Note that every object $F \in \SmodC$ is obtained from a short exact sequence in $\CMR$. 
Namely we have the short exact sequence $0 \to L \to M \to N \to 0$ such that 
$$
0 \to \Hom _R (\  , L) \to \Hom _R (\  , M) \to \Hom _R (\  , N) \to F \to 0 
$$
is exact in $\modC$. 
Since $F \in \SmodC$ is a subfunctor of $\Ext _R ^1 (\  , L)$ for some $L \in \CMR$, $F(X)$ has finite length for each $X \in \CMR$ if $R$ is an isolated singularity.  
Therefore we can define a group homomorphism associated with $X$ in $\CMR$
\begin{equation}\label{length hom}
\varphi _X : K_{0}(\SmodC ) \to \Z \ ;  \quad [F] \mapsto \dim _k F(X).
\end{equation}
If  $0 \to Z \to Y \to X \to 0$  is an AR sequence in $\CMR$, then the functor $S_X$  defined by an exact sequence 
$$
0 \to \Hom _R (\  , Z) \to \Hom _R (\  , Y) \to \Hom _R (\  , X) \to S_X \to 0 
$$
is a simple object in $\modC$  and all the simple objects in $\modC$ are obtained in this way from AR sequences.

We say that $R$ is of finite representation type if there are only a finite number of isomorphism classes of indecomposable Cohen-Macaulay $R$-modules. 
We note that if $R$ is of finite representation type, then $R$ is an isolated singularity (cf. \cite[Chapter 3.]{Y}). 
It is proved in \cite[(13.7.4)]{Y} that 
\medskip

{\it  for each object $F$ in $\SmodC$, there is a filtration by subobjects  $0 \subset F_1 \subset F_2 \subset \cdots \subset F_n = F$ such that each $F_{i}/F_{i-1}$ is a simple object in $\modC$ if $R$ is of finite representation type. }
\medskip

\noindent
We also remark that, since $\CMR$ is a Krull-Schmidt category,   
$$
S_X (Y) = 
\begin{cases}
k \quad \text{if} \ X \cong Y, \\
0 \quad \text{if} \ X \not\cong Y.
\end{cases}
$$
for an indecomposable module $Y \in \CMR$. See \cite[(4.11)]{Y} for instance.

\begin{lemma}\label{filtration}
If $R$ is of finite representation type, then we have the equality in $K_{0}(\SmodC )$
$$
[\SHom _R ( -, M )] = \sum _{X_i \in \indCMR} [\underline{X_i}, \ulM] \cdot [S_{X_i}]
$$
for each $M \in \CMR$. 
\end{lemma}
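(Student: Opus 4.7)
The plan is to realise $\SHom _R(-, \ulM)$ as an object of $\SmodC$, apply the filtration result (13.7.4) quoted just above in order to expand its class in $K_0(\SmodC)$ as a non-negative integer combination of the simple classes $[S_{X_i}]$, and then recover the coefficients by evaluating the length homomorphisms $\varphi_{X_i}$ of (\ref{length hom}) on both sides of this expansion.

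First I would check that $\SHom _R(-, \ulM)$ belongs to $\SmodC$. Choose a short exact sequence $0 \to \Omega M \to P_0 \to M \to 0$ in $\CMR$ with $P_0$ free. Applying $\Hom_R(X, -)$ for $X \in \CMR$ and using $\Ext^1_R(X, P_0) = 0$ (which holds for Cohen-Macaulay $X$ and free $P_0$), one obtains the exact sequence
$$\Hom_R(X, P_0) \to \Hom_R(X, M) \to \Ext^1_R(X, \Omega M) \to 0.$$
By the projectivity of free modules, any morphism $X \to M$ factoring through a free module factors through $P_0$, so the image of the first arrow is precisely $P(X, M)$; hence $\SHom _R(-, \ulM) \cong \Ext^1_R(-, \Omega M)$ as functors on $\CMR$. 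In particular, $\SHom _R(-, \ulM)$ is a cokernel of a map between representable functors, so it is finitely presented, and it obviously vanishes at $R$. Therefore it lies in $\SmodC$.

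Next, since $R$ is of finite representation type, the filtration result (13.7.4) produces a finite filtration of $\SHom _R(-, \ulM)$ in $\modC$ whose successive quotients are simple objects of the form $S_{X_i}$ with $X_i \in \indCMR$. Passing to classes in $K_0(\SmodC)$ yields
$$[\SHom _R(-, \ulM)] = \sum_{X_i \in \indCMR} m_{X_i} \cdot [S_{X_i}],$$
for non-negative integers $m_{X_i}$ which count the multiplicities of each simple in the filtration. To finish, I would apply $\varphi_{X_i}$ of (\ref{length hom}) to both sides: the left-hand side becomes $\dim_k \SHom _R(X_i, \ulM) = [\underline{X_i}, \ulM]$, while the pointwise formula $S_{X_j}(X_i) = k$ if $X_j \cong X_i$ and $0$ otherwise collapses the right-hand side to $m_{X_i}$, giving $m_{X_i} = [\underline{X_i}, \ulM]$. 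The only step that calls for genuine verification is the identification $\SHom _R(-, \ulM) \in \SmodC$, and specifically the fact that the image of $\Hom_R(-, P_0) \to \Hom_R(-, M)$ agrees with the ``factoring-through-free'' subfunctor $P(-, M)$; the rest is formal, using (13.7.4) together with the additivity of $\varphi_{X_i}$ on short exact sequences.
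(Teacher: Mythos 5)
Your proof is correct and follows essentially the same route as the paper: realise $\SHom_R(-,\ulM)$ as an object of $\SmodC$, invoke the filtration (13.7.4) by the simples $S_{X_i}$, and evaluate the length homomorphisms $\varphi_{X_i}$ to identify the multiplicities. Your extra verification that $\SHom_R(-,\ulM)\cong \Ext^1_R(-,\Omega M)$ is finitely presented is a welcome detail the paper leaves implicit (it appears explicitly only later, in the proof of Theorem \ref{cd}), but it does not change the argument.
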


\begin{proof}
For $F = \SHom _R ( -, M )$, $F(R )= 0$, so that $F \in \SmodC$. 
Since $R$ is of finite representation type, $F$ has a filtration by simple objects $S_{X_i}$. 
Hence we have the equality in $K_{0}(\SmodC )$:
$$
[F ] =  \sum _{X_i \in \indCMR} c_i \cdot [S_{X_i}]. 
$$
By using the homomorphism in (\ref{length hom}), we see that 
$$
[\underline{X_j}, \ulM ] = \varphi _{X_j} ([F]) = \dim _k \sum _{X_i \in \indCMR} c_i \cdot \dim_k S_{X_i}(X_j ) = c_j.
$$
Therefore we obtain the equation in the lemma.  
\end{proof}

\begin{theorem}\label{cd}
Let $R$ be of finite representation type and $M$ and $N$ be Cohen-Macaulay $R$-modules. 
Suppose that $[\ulX , \ulM] = [\ulX, \ulN]$ for each $\ulX \in \SCMR$. 
Then $\ulM \oplus \underline{\Omega M} \cong \ulN \oplus \underline{\Omega N}$. 
\end{theorem}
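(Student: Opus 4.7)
The plan is to reduce the hypothesis to an equality of classes in $K_0(\modC)$ and then invoke the injectivity statement of Lemma \ref{cartan map}. Set $F_M = \SHomR(-, \ulM)$ and $F_N = \SHomR(-, \ulN)$. For any indecomposable non-free $X_i$, the simple functor $S_{X_i}$ vanishes on $R$ and hence lies in $\SmodC$, so the filtration of $F_M$ furnished by \cite[(13.7.4)]{Y} is also a filtration inside the ambient category $\modC$ whose composition factors do not involve $S_R$. Via Lemma \ref{filtration}, the hypothesis $[\ulX, \ulM] = [\ulX, \ulN]$ for every $\ulX \in \SCMR$ therefore translates into the equality $[F_M] = [F_N]$ inside $K_0(\modC)$.

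Next I would pick minimal free covers $0 \to \Omega M \to R^n \to M \to 0$ and $0 \to \Omega N \to R^m \to N \to 0$. For any $X \in \CMR$, a morphism $X \to M$ that factors through an arbitrary free $R$-module lifts, by projectivity, through $R^n \to M$; so $P(X, M)$ coincides with the image of $\Hom_R(X, R^n) \to \Hom_R(X, M)$. Combining this with left exactness of $\Hom_R(-, -)$ on the short exact sequence, one obtains the four-term exact sequence
$$
0 \to \Hom_R(-, \Omega M) \to \Hom_R(-, R^n) \to \Hom_R(-, M) \to F_M \to 0
$$
in $\modC$. Writing $\gamma$ for the homomorphism of Lemma \ref{cartan map}, this yields the identity
$$
\gamma(\Omega M) + \gamma(M) = n\,\gamma(R) + [F_M]
$$
in $K_0(\modC)$, along with the analogous identity for $N$.

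Combining these two relations with $[F_M] = [F_N]$ produces $\gamma(M \oplus \Omega M \oplus R^m) = \gamma(N \oplus \Omega N \oplus R^n)$. The injectivity of $\gamma$ given by Lemma \ref{cartan map}, together with the Krull-Schmidt property of $\CMR$, then forces $M \oplus \Omega M \oplus R^m \cong N \oplus \Omega N \oplus R^n$ as Cohen-Macaulay $R$-modules; passing to $\SCMR$ annihilates the free summands and delivers the desired isomorphism $\ulM \oplus \underline{\Omega M} \cong \ulN \oplus \underline{\Omega N}$. The main obstacle I anticipate is the first step---verifying that the Jordan-Hölder computation for $F_M$ performed inside $\SmodC$ transports cleanly to the correct class in the larger $K_0(\modC)$---but this reduces to the elementary observation that the short exact sequences witnessing the filtration remain exact in $\modC$ and that no composition factor $S_R$ can appear.
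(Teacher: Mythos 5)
Your proposal is correct and follows essentially the same route as the paper: translate the hypothesis into $[\SHomR(-,\ulM)]=[\SHomR(-,\ulN)]$ in $K_0(\modC)$ via the filtration by simples, use the four-term projective resolution of the stable Hom functor coming from a free cover, and conclude with the injectivity of $\gamma$ from Lemma \ref{cartan map}. The only difference is that you spell out the verification of the four-term exact sequence and the passage from $K_0(\SmodC)$ to $K_0(\modC)$, which the paper leaves to a citation; the substance is identical.
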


\begin{proof}
Under the circumstances, we see that $[\SHom _R ( -, M )] = [\SHom _R (-, N )]$ in $K_{0}(\SmodC )$, hence in $K_{0}(\modC )$. 
Note that $\SHom _R (- , M ) \cong \Ext ^1 _R (-, \Omega M)$ for each $M \in \CMR$ (cf. \cite{Buc}). 
We have the resolution in $\modC$: 
$$
0 \to \Hom _R ( -, \Omega M ) \to \Hom _R ( -, P_M) \to \Hom _R ( -, M) \to \SHom _R (-, M ) \to 0,  
$$
where $P_M$ is a free $R$-module. 
Thus we have 
$$
 [\Hom _R ( -, M)] + [\Hom _R ( -, \Omega M )] - [\Hom _R ( -, P_M)] = [\Hom _R (-, N)] + [\Hom _R (-, \Omega N )] - [\Hom _R ( -, P_N )].  
$$
Hence, 
$$
 [\Hom _R ( -, M)] + [\Hom _R ( -, \Omega M )] + [\Hom _R ( -, P_N)] = [\Hom _R ( -, N)] + [\Hom _R ( -, \Omega N )] + [\Hom _R ( -, P_M )]. 
$$
According to Lemma \ref{cartan map}, we get 
$$
M \oplus \Omega M \oplus P_N \cong N \oplus \Omega N \oplus P_M.  
$$
Therefore $\ulM \oplus \underline{\Omega M} \cong \ulN \oplus \underline{\Omega N}$. 
\end{proof}

As a corollary, we have the following.   

\begin{corollary}\label{stable hom order}
Let $R$ be of finite representation type and $M$ and $N$ be Cohen-Macaulay $R$-modules. 
Suppose that $\ulU \cong \ulU [-1]$ for each indecomposable Cohen-Macaulay $R$-module $U$. 
Then $[\ulX , \ulM] = [\ulX, \ulN]$ for each $\ulX \in \SCMR$ if and only if $\ulM \cong \ulN$. 
In partiqular, $\homo$ is a partial order on $\SCMR$. 
\end{corollary}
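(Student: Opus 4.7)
The plan is to deduce this corollary directly from Theorem \ref{cd} by a short cancellation argument. The forward implication, that $\ulM \cong \ulN$ in $\SCMR$ forces $[\ulX , \ulM] = [\ulX, \ulN]$ for every $\ulX$, is immediate from the functoriality of $\SHom _R(\ulX, -)$.

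For the nontrivial direction I would apply Theorem \ref{cd} to obtain
\[
\ulM \oplus \underline{\Omega M} \cong \ulN \oplus \underline{\Omega N} \text{ in } \SCMR,
\]
and then use the hypothesis to cancel the $\Omega$-terms. Since the shift on $\SCMR$ is defined to be $\Omega ^{-1}$, the condition $\ulU \cong \ulU [-1]$ is equivalent to $\ulU \cong \underline{\Omega U}$ for each indecomposable $U \in \CMR$. Taking a Krull--Schmidt decomposition $\ulM \cong \bigoplus \ulU_i$ in $\SCMR$ and using that $\Omega$ is an auto-equivalence of $\SCMR$ (hence commutes with finite direct sums) yields $\underline{\Omega M} \cong \bigoplus \underline{\Omega U_i} \cong \bigoplus \ulU_i \cong \ulM$, and similarly $\underline{\Omega N} \cong \ulN$. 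Substituting produces $\ulM \oplus \ulM \cong \ulN \oplus \ulN$, and one last application of Krull--Schmidt cancellation in $\SCMR$ (recalled in Section \ref{stable hom order}) delivers $\ulM \cong \ulN$.

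The partial order claim is then essentially formal. Reflexivity and transitivity of $\homo$ are built into Definition \ref{dfn of hom order} by inequalities of dimensions of stable Homs. Antisymmetry is exactly the equivalence just established: if $\ulM \homo \ulN$ and $\ulN \homo \ulM$, then $[\ulX, \ulM] = [\ulX, \ulN]$ for every $\ulX \in \SCMR$, so $\ulM \cong \ulN$ in $\SCMR$.

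I do not anticipate a real obstacle; the only moving part is promoting the indecomposable-level assumption $\ulU \cong \underline{\Omega U}$ to the module-level identity $\underline{\Omega M} \cong \ulM$, which is handled by Krull--Schmidt together with the additivity of the auto-equivalence $\Omega$. All the substantial content was already packaged into Theorem \ref{cd}, so what remains is genuinely a bookkeeping step.
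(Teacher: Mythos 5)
Your argument is correct and is precisely the derivation the paper intends: the paper states the corollary with only the remark that it ``immediately follows'' from Theorem \ref{cd}, and your cancellation via $\ulU \cong \ulU[-1] \Leftrightarrow \ulU \cong \underline{\Omega U}$, additivity of $\Omega$, and Krull--Schmidt in $\SCMR$ is exactly the bookkeeping being elided. No gaps.
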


\begin{example}\label{stable hom order on A_even}
Let $R$ be a one dimensional simple singularity of type $(A_n )$, that is $R = k[[x, y]]/(x^{n+1} + y^2)$. 
If $n$ is an even integer, one can show that $X$ is isomorphic to $\Omega X$ up to a free summand for each $X \in \CMR$, so that $\ulX \cong \ulX[-1]$. See \cite[Proposition 5.11]{Y}. 
Thus $\homo$ is a partial order on $\SCMR$ if $n$ is an even integer. 
\end{example}

On the above example, if $n$ is an odd integer, we have indecomposable modules $X \in \CMR$ such that $\ulX \not \cong \ulX[-1]$. 
In fact, let $N_\pm = R/(x^{(n+1)/2} \pm \sqrt{-1}y)$. 
Then $N_{+}$ (resp. $N_{-}$) is a Cohen-Macaulay $R$-module which is isomorphic to $\Omega N_-$ (resp. $\Omega N_+$), so that $\underline{N_+} \not\cong \underline{N_+}[-1]$ (resp. $\underline{N_-} \not\cong \underline{N_-}[-1]$).
Though we can also show that $\homo$ is a partial order on $\SCMR$ if $n$ is an odd integer.

\begin{proposition}\label{stable hom order on A_n}
Let $R = k[[x, y]]/(x^{n+1} + y^2)$. 
Then $[\ulX , \ulM] = [\ulX, \ulN]$ for each $\ulX \in \SCMR$ if and only if $\ulM \cong \ulN$. 
\end{proposition}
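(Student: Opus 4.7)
The even case is already contained in Example~\ref{stable hom order on A_even}, so assume $n$ is odd. The starting point is Theorem~\ref{cd}: since the simple singularity $R$ is of finite representation type, the hypothesis already yields
$$
\ulM\oplus\underline{\Omega M}\cong \ulN\oplus\underline{\Omega N}
$$
in $\SCMR$, and it remains to upgrade this to $\ulM\cong \ulN$.

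The plan is to combine Krull-Schmidt in $\SCMR$ with the explicit description of the syzygy action on the indecomposables. From the classification of Cohen-Macaulay modules over $R$ (see \cite{Y}), every non-free indecomposable $X$ with $X\not\cong N_{+},N_{-}$ satisfies $\underline{\Omega X}\cong \ulX$, while $\underline{\Omega N_{\pm}}\cong \underline{N_{\mp}}$. Writing $\ulM\cong \bigoplus_X \ulX^{a_X}$ and $\ulN\cong \bigoplus_X \ulX^{b_X}$ in $\SCMR$, the isomorphism supplied by Theorem~\ref{cd} then forces $a_X=b_X$ for every $X\not\cong N_{+},N_{-}$ and $a_{N_{+}}+a_{N_{-}}=b_{N_{+}}+b_{N_{-}}$.

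To separate the individual multiplicities of $N_{+}$ and $N_{-}$, I would test the hypothesis against $\ulX=\underline{N_{+}}$. Using the matrix factorization $(\phi,\psi)=(x^{(n+1)/2}+\sqrt{-1}\,y,\;x^{(n+1)/2}-\sqrt{-1}\,y)$ of $N_{+}$, one checks that $\mathrm{Tr}\,N_{+}\cong N_{+}$, so Lemma~\ref{stable hom} reduces $[\underline{N_{+}},\underline{N_{\pm}}]$ to $\mathrm{Tor}_{1}^{R}(N_{+},N_{\pm})$, which is in turn computed by tensoring the periodic resolution of $N_{+}$ with $N_{\pm}$. A direct calculation should give $\mathrm{Tor}_{1}^{R}(N_{+},N_{+})\cong R/(\phi,\psi)\cong k[[x]]/(x^{(n+1)/2})$, of $k$-dimension $(n+1)/2$; on the other hand, $R/(\psi)\cong N_{-}$ is a discrete valuation ring on which $\phi$ acts as $2x^{(n+1)/2}$ (hence injectively), giving $\mathrm{Tor}_{1}^{R}(N_{+},N_{-})=0$. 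Setting $\delta=a_{N_{+}}-b_{N_{+}}=-(a_{N_{-}}-b_{N_{-}})$, the hypothesis at $\underline{N_{+}}$ then reads $\delta\cdot (n+1)/2=0$, so $\delta=0$ and $\ulM\cong \ulN$.

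The main obstacle is the Tor computation separating $N_{+}$ from $N_{-}$; however, since $\phi$ and $\psi$ each have a nonzero linear term in $y$, they are parts of regular systems of parameters of $k[[x,y]]$, and the whole calculation reduces to clean bookkeeping in the regular ambient ring and in the DVRs $k[[x,y]]/(\phi)$ and $k[[x,y]]/(\psi)$.
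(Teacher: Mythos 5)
Your proposal is correct and follows essentially the same route as the paper: apply Theorem~\ref{cd} together with $\underline{\Omega I_i}\cong \underline{I_i}$ and $\underline{\Omega N_{\pm}}\cong \underline{N_{\mp}}$ to match all multiplicities except for the split between $N_{+}$ and $N_{-}$, then test against $\underline{N_{+}}$ using $[\underline{N_{+}},\underline{N_{-}}]=0$ and $[\underline{N_{+}},\underline{N_{+}}]\neq 0$. The only cosmetic difference is that you verify the vanishing by computing $\mathrm{Tor}_{1}^{R}(\mathrm{Tr}\,N_{+},N_{\pm})$ from the matrix factorization, whereas the paper invokes $\SHom_R(\underline{N_{\pm}},\underline{N_{\mp}})\cong \Ext^1_R(N_{\pm},N_{\pm})=0$; both computations are correct.
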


\begin{proof}
We show the case when $n$ is an odd integer.  
Let $I_i = (x^i , y)$ be ideals of $R$ for $1 \leq i \leq (n-1)/2$. 
Then $\{ I_1, \cdots I_{(n-1)/2}, N_{+}, N_{-}  \}$ is a complete list of non free indecomposable Cohen-Macaulay $R$-modules.   
Note that $I_i \cong \Omega I_i$ up to free summand for $i = 1, \cdots (n-1)/2$. 
See \cite[Paragraph (9.9)]{Y}. 
Now let $M$, $N \in \CMR$ and suppose that  $[\ulX , \ulM] = [\ulX, \ulN]$ for each $\ulX \in \SCMR$. 
Set $\ulM = \bigoplus _{i=1}^{(n-1)/2} \underline{I_i}^{m_i} \oplus \underline{N_+}^{m_+} \oplus \underline{N_-}^{m_-}$ and $\ulN = \bigoplus _{i=1}^{(n-1)/2} \underline{I_i}^{n_i} \oplus \underline{N_+}^{n_+} \oplus \underline{N_-}^{n_-}$.
By Theorem \ref{cd}, $\ulM \oplus \underline{\Omega M} \cong \ulN \oplus \underline{\Omega N}$ . 
Thus
$$
\bigoplus _{i=1}^{(n-1)/2} \underline{I_i}^{2m_i} \oplus \underline{N_+}^{m_+ + m_-} \oplus \underline{N_-}^{m_+ + m_-} \cong \bigoplus _{i=1}^{(n-1)/2} \underline{I_i}^{2n_i} \oplus \underline{N_+}^{n_+ + n_-} \oplus \underline{N_-}^{n_+ + n_-}.
$$
Hence we have equalities:
$$
m_i = n_i , \quad m_+ + m_- = n_+ + n_-.
$$
Here we remark that 
$$
\SHom _R (N_\pm, N_\mp) \cong \Ext_R ^1 (N_\pm, N_\pm) = 0.  
$$
Using this, we have 
$$
[\underline{N_+}, \ulM] = \sum_{i=1}^{(n-1)/2} m_i [\underline{N_+}, \underline{I_i}] + m_+  [\underline{N_+}, \underline{N_+}]  = \sum_{i=1}^{(n-1)/2} n_i [\underline{N_+}, \underline{I_i}] + n_+  [\underline{N_+}, \underline{N_+}] = [\underline{N_+}, \ulN]. 
$$
This equality show that $m_+ = n_+$, so that $m_- = n_-$. Consequently $\ulM \cong \ulN$. 
\end{proof}

\begin{remark}\label{rmk1 of stable hom order}
The stable hom relation $\homo$ is not always a partial order on $\SCMR$ even if the base ring $R$ is a simple singularity of type ($A_n$). 
Let $R = k[[x, y, z]]/(x^3 + y^2 + z^2)$, that is, $R$ is a two dimensional simple singularity of type $(A_2 )$. 
And let $I$ (resp. $J$) be an ideal generated by $(x, y)$ (resp. $(x^2, y)$). 
Note that the set $\{ I, J \}$ is a complete list of non free indecomposable Cohen-Macaulay $R$-modules (see \cite[Chapter 10]{Y} for instance). 
Then it is easy to see that $[\underline{I}, \underline{I}] = [\underline{I}, \underline{J}] = [\underline{J}, \underline{I}] = [\underline{J}, \underline{J}]= 1$. 
However $\underline{I} \not \cong \underline{J}$. Thus $\homo$ is not a partial order on $\SCMR$.
\end{remark}

When $Z$ is indecomposable then denote by $\mu (\ulZ, \ulM)$ the multiplicity of $\ulZ$ as a direct summand of $\ulM$. 
On an AR triangle, we have the following.

\begin{proposition}\label{prop of AR triangles}
Let $\ARTX: \ulZ \to \ulY \to \ulX \to \ulZ [1]$ be an AR triangle in $\SCMR$. 
Then the following statements hold for each indecomposable $U \in \CMR$. 
\begin{itemize}
\item[(1)] $[\ulU, \ulX ] + [\ulU, \ulZ ] - [\ulU, \ulY ] = \mu (\ulU, \ulX) + \mu (\ulU, \ulX [-1])$. 

\item[(2)] If $\ulU$ is periodic of period 2, that is, $\ulU \cong \ulU [2]$, then
$$
[\ulU, \ulX ] + [\ulU, \ulZ ] - [\ulU, \ulY ] = [\ulU [-1] , \ulX ] + [\ulU [-1], \ulZ ] - [\ulU [-1], \ulY ] 
$$ 
\end{itemize}
\end{proposition}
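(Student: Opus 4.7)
The plan is to apply the cohomological functor $\SHom _R (\ulU , -)$ to the AR triangle $\ulSigma $, giving the long exact sequence
\begin{equation*}
\cdots \to [\ulU , \ulX [-1]] \xrightarrow{d} [\ulU , \ulZ ] \xrightarrow{\alpha } [\ulU , \ulY ] \xrightarrow{\beta } [\ulU , \ulX ] \xrightarrow{\ulw _{*}} [\ulU , \ulZ [1]] \to \cdots .
\end{equation*}
By exactness, $[\ulU , \ulZ ] - [\ulU , \ulY ] + [\ulU , \ulX ] = \dim \Im (d) + \dim \Im (\ulw _{*})$, so part (1) reduces to showing $\dim \Im (\ulw _{*}) = \mu (\ulU , \ulX )$ and $\dim \Im (d) = \mu (\ulU , \ulX [-1])$.

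The crucial step is identifying $\dim \Im (\ulw _{*})$. First, (ART3) implies that every non-split-epimorphism $\ulU \to \ulX $ factors through $\ulf $. Conversely, if $\ulg = \ulf \circ \ulh $ were a split epi with right inverse $\ulk $, then $\ulf \ulh \ulk = 1 _{\ulX }$, so $\ulf $ would split the triangle and force $\ulw = 0$, contradicting (ART2). Hence $\Im \beta $ consists precisely of the non-split-epi morphisms from $\ulU $ to $\ulX $. Write $\ulU = \ulX ^{a} \oplus \ulV $ with $a = \mu (\ulU , \ulX )$ and $\ulV $ having no $\ulX $-summand, so that $[\ulU , \ulX ] = \SEnd _R (X)^{a} \oplus [\ulV , \ulX ]$. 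By (ART1) and the hypothesis that $k$ is algebraically closed, $\SEnd _R (X)$ is a local $k$-algebra with residue field $k$; moreover, any composition $\ulX \to \ulV \to \ulX $ lies in $\mathrm{rad}\,\SEnd _R (X)$ (otherwise it would be a unit, exhibiting $\ulX $ as a direct summand of $\ulV $). A componentwise check then shows that $(\ulg _1 , \ldots , \ulg _a , \ulg _V ) \in \SEnd _R (X)^{a} \oplus [\ulV , \ulX ]$ is split epi if and only if some $\ulg _i$ is a unit. Consequently the non-split-epi subspace equals $\mathrm{rad}\,\SEnd _R (X)^{a} \oplus [\ulV , \ulX ]$, of $k$-codimension $a = \mu (\ulU , \ulX )$. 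Thus $\dim \Im (\ulw _{*}) = \mu (\ulU , \ulX )$. Applying the same argument to the shifted AR triangle $\ulZ [-1] \to \ulY [-1] \to \ulX [-1] \to \ulZ $, which is the AR triangle ending in $\ulX [-1]$, yields $\dim \Im (d) = \mu (\ulU , \ulX [-1])$, and (1) follows.

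For part (2), apply (1) with $\ulU $ replaced by $\ulU [-1]$:
\begin{equation*}
[\ulU [-1], \ulZ ] - [\ulU [-1], \ulY ] + [\ulU [-1], \ulX ] = \mu (\ulU [-1], \ulX ) + \mu (\ulU [-1], \ulX [-1]).
\end{equation*}
Since $[1]$ is an auto-equivalence of $\SCMR $, one has $\mu (\ulU [-1], \ulW ) = \mu (\ulU , \ulW [1])$ for every $\ulW $. Combined with $\ulU \cong \ulU [2]$, this gives $\mu (\ulU [-1], \ulX [-1]) = \mu (\ulU , \ulX )$ and $\mu (\ulU [-1], \ulX ) = \mu (\ulU , \ulX [1]) = \mu (\ulU [2], \ulX [1]) = \mu (\ulU , \ulX [-1])$. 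Therefore the right-hand side equals $\mu (\ulU , \ulX ) + \mu (\ulU , \ulX [-1])$, which by (1) coincides with $[\ulU , \ulZ ] - [\ulU , \ulY ] + [\ulU , \ulX ]$, as required.

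The main obstacle is the dimension count in the second paragraph: pinning down that the codimension of $\Im \beta $ in $[\ulU , \ulX ]$ is exactly $\mu (\ulU , \ulX )$ requires combining (ART3) with the locality of the stable endomorphism ring of the indecomposable $\ulX $, and handling the possible $\ulX $-summands of $\ulU $ separately from the rest via the radical decomposition above.
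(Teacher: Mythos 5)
Your proposal is correct and follows essentially the same route as the paper: apply $\SHom_R(\ulU,-)$ to the triangle, identify the cokernel of $\SHom_R(\ulU,\ulf)$ (equivalently the image of $\ulw_*$) as $k^{\mu(\ulU,\ulX)}$ using (ART2), (ART3) and the locality of $\SEnd_R(\ulX)$, and handle the shifted term via the shifted AR triangle; part (2) is the same shift-invariance argument for $\mu$ as in the paper. You merely spell out in more detail the cokernel dimension count that the paper states in one line.
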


\begin{proof}
(1) Apply $\SHom _R (U, - )$ to the triangle $\ARTX$, we have a long exact sequence as follows: 
$$
\begin{CD} 
\SHom _R (U, Z [-1])@>>> \SHom _R (U , Y [-1]) @>{\SHom _R(U, g [-1])}>> \SHom _R (U , X [-1] ) @>>>  \\
\SHom _R (U, Z )@>{\SHom _R(U, f)}>> \SHom _R (U , Y )@>{\SHom _R(M , g)}>> \SHom _R (U , X ) @>>> \\
\SHom _R (U, Z [1])@>{\SHom _R(U , f [1])}>> \SHom _R (U , Y [1] ) @>>> \SHom _R (U , X [1]) @>>> .\\
\end{CD} 
$$
Since $\SEnd _R (X )/ \mathrm{rad} \SEnd _R (X ) \cong k$ for each non free indecomposable Cohen-Macaulay module $X$ and by the property of an AR triangle (ART3), we have
$$
\mathrm{Ker} \ \SHom _R (U ,  f [i+1] ) \cong \mathrm{Coker}\  \SHom _R (U , g [i] ) \cong k^{\mu (\ulU, \ulX [i])}
$$
for all $i \in \Z$. 
In particular, the following sequence is exact. 
$$
0 \to k^{\mu (\ulU, \ulX[-1] )} \to \SHomR (U , Z) \to \SHomR (U , Y ) \to \SHomR (U , X ) \to k^{\mu (\ulU, \ulX)} \to 0.
$$
Therefore we obtain the required equation.

(2) Note from (1) that the equations
$$
[\ulU, \ulX ] + [\ulU, \ulZ ] - [\ulU, \ulY ] = \mu (\ulU, \ulX) + \mu (\ulU, \ulX [-1])
$$
and 
$$
[\ulU [-1] , \ulX ] + [\ulU [-1], \ulZ ] - [\ulU [-1], \ulY ] = \mu (\ulU [-1], \ulX) + \mu (\ulU [-1], \ulX [-1])
$$
hold. 
Since the shift functor $(-) [1]$ (hence $(-) [-1]$) is an auto-functor, $\mu (\ulU , \ulX) = \mu (\ulU [-1] , \ulX [-1] )$. 
Moreover $\mu (\ulU [-1] , \ulX) = \mu (\ulU  , \ulX [-1] )$ for $\ulU [-1] [-1] \cong \ulU [-2] \cong \ulU$. 
Consequently we get the assertion. 
\end{proof}

\begin{remark}\label{rmk2 of stable hom order}
By using Proposition \ref{prop of AR triangles}, one can show that $\homo$ is a partial order on $\SCMR$ if $\ulU \cong \ulU [-1]$ for each $U \in \CMR$ without the assumption that $R$ is of finite representation type.  
\end{remark}

\section{Stable hom relation and Grothendieck group of $\SCMR$}\label{Hom order and Grothendieck group}

For later reference we state some results on the stable hom relation between modules which give the same class in the Grothendieck group of $\SCMR$.  
The Grothendieck group of $\SCMR$ (more generally a triangulated category) is defined by 
$$
K_0 (\SCMR ) = \GSR /< \ulX + \ulZ - \ulY \ | \text{There is a triangle }\ulZ \to \ulY \to \ulX \to \ulZ [1] \text{ in } \SCMR >, 
$$
where $\GSR$ is a free abelian group $\bigoplus _{\ulX \in \mathrm{ind} \SCMR} \Z \cdot \ulX$. 
We refer the reader to \cite[Chapter 3]{H} for the details. 
Since $R$ is Gorenstein, one can show that $[\ulM ] = [\ulN ]$ in $K_0 (\SCMR )$ if and only if $[M \oplus P] = [N \oplus Q]$ in $K_0 (\CMR )$ for some free $R$-modules $P$, $Q$.

\begin{lemma}\label{stable analogue of Yoshino's theorem}
If $R$ is of finite representation type, we have the equality of subgroups of $\GSR$: 
$$
\begin{array}{l}
< \ulX + \ulZ - \ulY \ | \text{There is a triangle }\ulZ \to \ulY \to \ulX \to \ulZ [1] \text{ in } \SCMR > \\
= < \ulX + \ulZ - \ulY \ | \text{There is an AR triangle }\ulZ \to \ulY \to \ulX \to \ulZ [1] \text{ in } \SCMR  >. 
\end{array}
$$
\end{lemma}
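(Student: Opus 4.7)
The containment $\supseteq$ is tautological, so the task is to show that every generator $\ulX + \ulZ - \ulY$ coming from an arbitrary triangle lies in the subgroup $U \subset \GSR$ generated by AR-triangle relations. My plan is to transfer the question from the triangulated category $\SCMR$ into the abelian functor category $\modC$, invoke the simple-object filtration afforded by finite representation type, and then descend via the injective homomorphism $\gamma$ of Lemma \ref{cartan map}.

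Concretely, I would first use the description of triangles in $\SCMR$ recalled in Section \ref{stable hom order} to realize the given triangle by an honest short exact sequence $0 \to Z \to Y' \to X \to 0$ in $\CMR$ with $\underline{Y'} \cong \ulY$, so that the images of $Y$ and $Y'$ in $\GSR$ coincide. Then I would introduce the cokernel functor $F$ in
$$
0 \to \HomR(-, Z) \to \HomR(-, Y') \to \HomR(-, X) \to F \to 0,
$$
observe that $F(R) = 0$ since $Y' \to X$ is surjective, hence $F \in \SmodC$, and apply the filtration result quoted from \cite[(13.7.4)]{Y} to produce an equality $[F] = \sum_i c_i [S_{X_i}]$ in $K_0(\modC)$, where each simple $S_{X_i}$ arises from an AR sequence $0 \to Z_i \to Y_i \to X_i \to 0$. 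Comparing the canonical projective resolution of $F$ with those of the $S_{X_i}$ then yields an equality among alternating sums of classes of representable functors in $K_0(\modC)$; by injectivity of $\gamma$, this lifts to the identity
$$
X + Z - Y' = \sum_i c_i (X_i + Z_i - Y_i)
$$
in $\GR$. Projecting along the natural surjection $\GR \twoheadrightarrow \GSR$, which kills the class of $R$, and invoking Proposition \ref{AR triangle} to recognize each $\underline{X_i} + \underline{Z_i} - \underline{Y_i}$ as an AR-triangle relation, would then exhibit $\ulX + \ulZ - \ulY$ as a $\Z$-linear combination of generators of $U$.

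The subtlety I expect concerns the interface between the triangulated and abelian settings: I must confirm that the filtration of $F$ by simples in $\SmodC$ computes $[F]$ correctly in $K_0(\modC)$, which follows because each successive quotient sits in a short exact sequence in $\modC$, and that passing from a representative short exact sequence in $\CMR$ through $\GR$ to $\GSR$ faithfully records the triangle's contribution to the Grothendieck group. Once these bookkeeping checks are in hand, the argument is essentially a direct translation of Yoshino's theorem on generation of $K_0$-relations by AR sequences into the stable category, with Proposition \ref{AR triangle} serving as the bridge.
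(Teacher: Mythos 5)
Your proposal is correct and takes essentially the same route as the paper: the paper's proof is a one-line appeal to Corollary \ref{correspondence}, Yoshino's Theorem 13.7 and the description of triangles in $\SCMR$ by short exact sequences in $\CMR$, and your argument is precisely an unpacking of those ingredients (realize the triangle by a short exact sequence, filter the cokernel functor $F \in \SmodC$ by the simples $S_{X_i}$ coming from AR sequences, descend through the injective map $\gamma$ of Lemma \ref{cartan map}, and project to $\GSR$ using Proposition \ref{AR triangle}). The only detail worth making explicit is that no composition factor of $F$ can be the simple functor attached to the free module $R$, which holds because $F(R)=0$ and evaluation at $R$ is exact.
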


\begin{proof}
It follows from Corollary \ref{correspondence}, \cite[Theorem 13.7]{Y} and the definition of triangles in stable categories.  
\end{proof}

Let $R$ be of finite representation type. 
In the rest of the paper, we assume that the nonisomorphic indecomposable Cohen-Macaulay $R$-modules are indexed by $\{ 1, 2, \dots, n \}$, say $X_1$, $\dots$, $X_n$. 
By Lemma \ref{stable analogue of Yoshino's theorem}, $[\ulM ] = [\ulN ]$ in $K_0 (\SCMR)$ if and only if there exist AR triangles $\underline{Z_k} \to \underline{Y_k} \to \underline{X _k} \to \underline{Z_k}[1]$ and non-zero integers $b_k$ such that  the equality 
$$
[\ulN ] - [\ulM ] = \sum _{k \in \KK} b_k \cdot ([\underline{X_k} ]+ [\underline{Z_k}] - [\underline{Y _k}])
$$
holds for some $\KK \subseteq \{ 1, 2, \dots, n \}$. 
We write positive and negative coefficients separately. 
That is,  we express the equality as 
\begin{equation}\label{equation in Grothendieck group}
[\ulN ] - [\ulM ] = \sum _{i \in \II} c_i \cdot ([\underline{X_i} ]+ [\underline{Z_i}] - [\underline{Y _i}]) - \sum _{j \in \JJ} d _{j} \cdot ([\underline{X_{j}}]  + [\underline{Z_{j}}] - [\underline{Y _{j}}] ),  
\end{equation} 
where $\II$, $\JJ \subseteq \{ 1, 2, \dots, n \}$ are disjoint sets and $c_i$, $d_j$ are non-negative integers. 
Note that $X_i \not \cong X_j$ if $i \in \II$ and $j \in \JJ$.

Now we consider the following condition:

\begin{itemize}
\item[$(\ast )$] For an AR triangle $\ulZ \to \ulY \to \ulX \to \ulZ [1]$, $[\ulX ] + [\ulZ] - [\ulY] = [\ulX [-1]] + [\ulZ [-1]] - [\ulY [-1]]$ in $\GSR$.    
\end{itemize}

\begin{remark}\label{the condition for Grothendieck group}
\begin{itemize}
\item[(1)] The condition $(\ast )$ says that, for AR triangles $\ulZ \to \ulY \to \ulX \to \ulZ [1]$ and $\ulZ' \to \ulY '\to \ulX' \to \ulZ' [1]$, $[\ulX ] + [\ulZ] - [\ulY] = [\ulX'] + [\ulZ'] - [\ulY']$ in $\GSR$ if $[\ulU, \ulX ]+ [\ulU, \ulZ] - [\ulU, \ulY ] = [\ulU, \ulX' ]+ [\ulU, \ulZ'] - [\ulU, \ulY' ]$ for each indecomposable $\ulU$. 
Since it follows from Proposition \ref{prop of AR triangles} that $\mu (\ulX , \ulX) + \mu (\ulX , \ulX[-1]) = \mu (\ulX , \ulX') + \mu (\ulX , \ulX'[-1])$, we see that $\ulX \cong \ulX'$ or $\ulX \cong \ulX' [-1]$. 
If $\ulX \cong \ulX'$ the equation is obvious and assume that $\ulX \cong \ulX' [-1]$. 
Then by ($\ast$), 
$$
\begin{array}{ll}
[\ulX ] + [\ulZ] - [\ulY] &= [\ulX' [-1] ] + [\ulZ' [-1] ] - [\ulY' [-1] ] \\
&= [\ulX' ] + [\ulZ' ] - [\ulY' ] \\
\end{array}
$$
in $\GSR$. 

\item[(2)] We also remark that, under the condition $(\ast )$, $\tau \ulX \cong \ulX [-1]$ holds if $\ulX \cong \ulX [2]$. 
Suppose that $\ulX \cong \ulX [-1]$. The claim follows from Remak \ref{Serre functor}. 
Suppose that $\ulX \not\cong \ulX [-1]$. 
For the AR triangle $\ARTX :\tau \ulX \to \underline{E_X} \to \ulX \to \tau \ulX [1]$, we have
$$
\ulX \oplus \tau \ulX \oplus \underline{E_X}[-1] \cong \ulX [-1] \oplus \tau \ulX [-1]  \oplus \underline{E_X}.  
$$
Assume that $\tau \ulX \not\cong \ulX [-1]$. 
Then $\tau \ulX [-1] \not\cong \ulX [-2] \cong \ulX$. 
Since $\ulX \not \cong \ulX [-1]$, $\tau \ulX \not \cong \tau \ulX [-1]$.  
Then $\ulX \oplus \tau \ulX $ is ismorphic to a direct summand of $\underline{E_X}$, so that $\underline{E_X} \cong \ulX \oplus \tau \ulX  \oplus \underline{E}'$.  
Hence we have the equality;
$$
[\ulX, \ulX ] + [\ulX, \tau \ulX ] - [\ulX, \underline{E_X} ] = [\ulX, \ulX ] + [\ulX, \tau \ulX ] - [\ulX , \ulX \oplus \tau \ulX  \oplus \underline{E}']  = -[\ulX , \underline{E}'].
$$   
However, by Proposition \ref{prop of AR triangles}, 
$$0 \geqq -[\ulX , \underline{E}'] = \mu (\ulX, \ulX) + \mu (\ulX, \ulX [-1]) = 1.$$ 
This makes a contradiction, such that $\tau \ulX \cong \ulX [-1]$.

\item[(3)] The condition $(\ast )$ holds when $R = k[[x, y]]/(x^{n+1} + y^2)$ (cf. \cite[Proposition 5.11, Paragraph (9.9)]{Y}). 
\end{itemize}
\end{remark}

We say that $R$ satisfies $(\ast )$ if each AR triangle satisfies the condition $(\ast )$.

\begin{proposition}\label{prop for inequality}
Let $R$ be of finite representation type which satisfies $(\ast )$ and $M$ and $N$ be Cohen-Macaulay $R$-modules with $[\ulM ] = [\ulN ]$ in $K_0 (\SCMR)$.  
Then $\ulM \homo \ulN$ if and only if there exist AR triangles $\underline{Z_i} \to \underline{Y_i} \to \underline{X _i} \to \underline{Z_i}[1]$ and non-negative integers $c_i$ with the equation in $\GSR$;
\begin{equation}\label{expression}
[\ulN ] - [\ulM ] = \sum _{i \in \II} c_i \cdot ([\underline{X_i} ]+ [\underline{Z_i}] - [\underline{Y _i}]), 
\end{equation}
where $\II \subseteq \{ 1, 2, \dots, n \}$. 
\end{proposition}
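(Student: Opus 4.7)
The plan is to handle the two implications of the equivalence separately. The $(\Leftarrow)$ direction is a short computation from Proposition \ref{prop of AR triangles}(1), while the $(\Rightarrow)$ direction is the substantive step and depends crucially on condition $(\ast)$ to collapse ``wrong-sign'' contributions.

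For $(\Leftarrow)$, I would fix an arbitrary $\ulU \in \SCMR$ and observe that the assignment $[\ulV] \mapsto [\ulU, \ulV]$ on the indecomposable generators of $\GSR$ extends $\Z$-linearly to a well-defined group homomorphism $\GSR \to \Z$. Applying it to the assumed expression in $\GSR$ yields
\[
[\ulU, \ulM] - [\ulU, \ulN] = \sum_i c_i \bigl([\ulU, \underline{X_i}] + [\ulU, \underline{Z_i}] - [\ulU, \underline{Y_i}]\bigr),
\]
and each bracketed summand equals $\mu(\ulU, \underline{X_i}) + \mu(\ulU, \underline{X_i}[-1]) \geq 0$ by Proposition \ref{prop of AR triangles}(1). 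Letting $\ulU$ range over $\SCMR$ delivers the required $\homo$-relation between $\ulM$ and $\ulN$, up to the sign convention on the $c_i$.

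For $(\Rightarrow)$, I would assume $\ulM \homo \ulN$ together with $[\ulM] = [\ulN]$ in $K_0(\SCMR)$. By Lemma \ref{stable analogue of Yoshino's theorem} and (\ref{equation in Grothendieck group}), there is some integer decomposition $[\ulM] - [\ulN] = \sum_\alpha e_\alpha r_\alpha$ in $\GSR$, with $r_\alpha = [\underline{X_\alpha}] + [\underline{Z_\alpha}] - [\underline{Y_\alpha}]$ ranging over AR triangles and $e_\alpha \in \Z$. Condition $(\ast)$ now plays the key role: since the shift functor sends AR triangles to AR triangles, each $\alpha$ has a partner $\alpha'$ ending at $\underline{X_\alpha}[-1]$, and $(\ast)$ identifies $r_\alpha = r_{\alpha'}$ in $\GSR$. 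Hence the AR triangles partition into orbits $\{\alpha, \alpha'\}$ of size at most two, and within each orbit only the combined coefficient $f_p := e_\alpha + e_{\alpha'}$ is fixed by the expression; we are free to redistribute it between the two representatives. Applying the evaluation $[\underline{X_\beta}, -]$ to the expression and using Proposition \ref{prop of AR triangles}(1) shows that only the orbit containing $\beta$ survives and returns $k_\beta \cdot f_{p_\beta}$ with $k_\beta \in \{1,2\}$. The hypothesis $\ulM \homo \ulN$ then pins down the sign of every $f_p$, after which placing each $f_p$ entirely on one representative of its orbit produces the required single-signed expression $[\ulM] - [\ulN] = \sum_i c_i r_i$.

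The main obstacle will be the bookkeeping at this final redistribution step. A subtle case analysis is needed when an orbit degenerates to a singleton $\{\alpha\}$ (i.e.\ $\underline{X_\alpha} \cong \underline{X_\alpha}[-1]$), where the evaluation $[\underline{X_\alpha}, -]$ picks up a factor of $2$ rather than $1$, and one must verify that the sign conclusion on $f_p$ is uniform with the two-element case. All identities must also be kept at the level of $\GSR$ rather than $K_0(\SCMR)$, since $\SHomR(\ulU, -)$ is not exact on triangles and the evaluation $[\ulU, -]$ does not descend to a homomorphism out of the Grothendieck group.
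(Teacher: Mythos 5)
Your proposal is correct and follows essentially the same route as the paper: both start from the general decomposition supplied by Lemma \ref{stable analogue of Yoshino's theorem}, evaluate $[\ulU,-]$ on it in $\GSR$ via Proposition \ref{prop of AR triangles}(1), and use condition $(\ast )$ (as in Remark \ref{the condition for Grothendieck group}(1)) to merge the contributions of shift-related AR triangles so that the hom order forces a single sign --- the paper merely phrases this as an iterative cancellation of negative constituents rather than your one-pass orbit bookkeeping. One small correction: since the proposition does not assume $\ulX \cong \ulX [2]$, the shift orbits need not have size at most two, but your argument goes through unchanged over the full (finite) shift orbits, on all of whose members $(\ast )$ still identifies the elements $[\underline{X_i}]+[\underline{Z_i}]-[\underline{Y_i}]$ of $\GSR$.
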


\begin{proof}
According to the equality (\ref{equation in Grothendieck group}), we see that 
$$
\begin{array}{ll}
[\ulU, \ulN ] - [\ulU, \ulM ] &= \sum _{i \in \II} c_i \cdot ([\ulU, \underline{X_i} ]+ [\ulU, \underline{Z_i}] - [\ulU, \underline{Y _i}]) - \sum _{j \in \JJ}  d_{j} \cdot ([\ulU, \underline{X_{j}}]  + [\ulU, \underline{Z_{j}}] - [\ulU, \underline{Y_{j}}] ) \\
&= \sum _{i \in \II} c_i \cdot \{ \mu (\ulU, \underline{X_i} ) + \mu (\ulU, \underline{X_i}[-1] ) \}  - \sum _{j \in \JJ} d_{j} \cdot \{ \mu (\ulU, \underline{X_{j}})  + \mu ( \ulU, \underline{X_{j}}[-1] ) \}  \\
\end{array}
$$
for each indecomposable module $U \in \CMR$.   
Assume that $\ulM \homo \ulN$. 
If $d_{j} \not= 0$, there exist $i$ such that $\underline{X_{j}}[-1] \cong \underline{X_i}$ with $d_{j} \leq c_i$. 
By the condition $(\ast )$, we can omit such constituents in the expression. 
Repeating this procedure, we obtain the equation. 
\end{proof}

\begin{corollary}\label{cor for inequality}
Let $R$ be of finite representation type which satisfies $(\ast )$. 
Then the stable hom relation is a partial order between modules which give the same class in the Grothendieck group of $\SCMR$. 
\end{corollary}

\begin{proof}
Let $M$ and $N$ be Cohen-Macaulay $R$-modules with $[\ulM ] = [\ulN ]$ in $K_0 (\SCMR)$. 
It follows from Proposition \ref{prop of AR triangles} and the expression in Proposition \ref{prop for inequality} that 
$$
\begin{array}{ll}
[\ulX, \ulN] - [\ulX , \ulM] &= \sum _{i \in \II} c_i \cdot ([\ulX, \underline{X_i} ]+ [\ulX, \underline{Z_i}] - [\ulX, \underline{Y _i}]) \\
&= \sum _{i \in \II} c_i (\mu (\ulX, \underline{X_i}) + \mu (\ulX, \underline{X_i} [-1])).
\end{array}
$$
Suppose that $[\ulX, \ulM] = [\ulX , \ulN]$ for each $\ulX \in \SCMR$ and then $c_i =0$. 
Thus $[\ulM ] - [\ulN ] = 0$ in $\GSR$, so that $\ulM \cong \ulN$. 
\end{proof}

Under the circumstance of Proposition \ref{prop for inequality}, we say that the expression (\ref{expression}) is irredundant if $\underline{X_i} \not\cong \underline{X_j}[-1]$ for each $i$ and $j$ with $i \not= j$. 
Since $R$ is of finite representation type, we can always take the expression irredundant.

\begin{lemma}\label{lemma of stable hom order}
Let $R$ be of finite representation type which satisfies $(\ast )$ and  $M$ and $N$ be Cohen-Macaulay $R$-modules with $\ulM \homo \ulN$ and $[\ulM ] = [\ulN ]$ in $K_0 (\SCMR)$. 
Let $U$ be a non free indecomposable direct summand of $N$. 
Suppose that $[\ulU, \ulM ] = [\ulU , \ulN] $ and $\ulU \cong \ulU [2]$. 
Then $U$ is also a direct summand of $M$.   
\end{lemma}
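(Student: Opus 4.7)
The plan is to encode the hypotheses $\ulM \homo \ulN$ and $[\ulM]=[\ulN]$ in $K_0(\SCMR)$ as an irredundant $\GSR$-identity via Proposition~\ref{prop for inequality}, and then to read off the multiplicity of $\ulU$ as a summand of $\ulM$ by combining three ingredients: the length-count of Proposition~\ref{prop of AR triangles}(1), the Serre-functor identification $\underline{\tau(-)} \cong (-)[d]$ from Remark~\ref{Serre functor} (with $d = \dim R$), and the 2-periodicity $\ulU \cong \ulU[2]$.

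After the sign rearrangement forced by $\ulM \homo \ulN$ (obtained by testing on $\ulU = \underline{X_i}$ together with irredundancy), Proposition~\ref{prop for inequality} supplies
$$[\ulN]-[\ulM] = \sum_{i=1}^{n} c_i \bigl([\underline{X_i}]+[\underline{Z_i}]-[\underline{Y_i}]\bigr)$$
in $\GSR$, with $c_i > 0$ and AR triangles $\underline{Z_i} \to \underline{Y_i} \to \underline{X_i} \to \underline{Z_i}[1]$ whose right ends $\underline{X_i}$ are pairwise non-isomorphic up to shift. Applying $\SHomR(\ulU, -)$ and invoking Proposition~\ref{prop of AR triangles}(1) turns this into
$$[\ulU,\ulN] - [\ulU,\ulM] = \sum_{i=1}^{n} c_i \bigl(\mu(\ulU,\underline{X_i}) + \mu(\ulU,\underline{X_i}[-1])\bigr).$$
The hypothesis $[\ulU,\ulM] = [\ulU,\ulN]$ kills the LHS; positivity of every $c_i$ and every $\mu$ then pins down $\mu(\ulU,\underline{X_i}) = \mu(\ulU,\underline{X_i}[-1]) = 0$ for each $i$, so $\ulU$ is isomorphic neither to $\underline{X_i}$ nor to $\underline{X_i}[-1]$ (and, via $\ulU \cong \ulU[2]$, also not to $\underline{X_i}[1]$).

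Next, I read off the $\ulU$-coefficient of both sides of the same $\GSR$-identity:
$$\mu(\ulN,\ulU) - \mu(\ulM,\ulU) = \sum_{i=1}^{n} c_i \bigl(\mu(\underline{X_i},\ulU) + \mu(\underline{Z_i},\ulU) - \mu(\underline{Y_i},\ulU)\bigr).$$
The indecomposable $\underline{X_i}$ satisfies $\mu(\underline{X_i},\ulU) \in \{0,1\}$ with value $1$ only if $\underline{X_i} \cong \ulU$, a case excluded by the previous step; so $\mu(\underline{X_i},\ulU) = 0$. For $\mu(\underline{Z_i},\ulU)$, the Serre-functor relation yields $\underline{Z_i} = \underline{\tau X_i} \cong \underline{X_i}[d]$, so $\mu(\underline{Z_i},\ulU) = 1$ would require $\underline{X_i} \cong \ulU[-d]$; by $\ulU \cong \ulU[2]$ the right-hand side is $\ulU$ if $d$ is even and $\ulU[-1] \cong \ulU[1]$ if $d$ is odd, and both possibilities were excluded above. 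Hence $\mu(\underline{Z_i},\ulU) = 0$ as well, and the identity collapses to $\mu(\ulM,\ulU) - \mu(\ulN,\ulU) = \sum c_i \mu(\underline{Y_i},\ulU) \geq 0$. Therefore $\mu(\ulM,\ulU) \geq \mu(\ulN,\ulU) \geq 1$, and since $U$ is a non-free indecomposable, Krull-Schmidt in $\CMR$ upgrades this stable summand of $\ulM$ to a genuine direct summand $U$ of $M$.

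The subtle step is the Serre-shift reduction in the second half: it is the $2$-periodicity of $\ulU$, regardless of the parity of $d$, that lets $\mu(\underline{Z_i},\ulU)$ be pushed back onto exactly the two exclusions already obtained from the Hom identity, closing the multiplicity inequality. The secondary bookkeeping item is fixing the sign of the $c_i$ in Proposition~\ref{prop for inequality}, which one handles by testing the Hom inequality on $\ulU = \underline{X_i}$ and invoking irredundancy.
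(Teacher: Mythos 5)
Your proof is correct and follows essentially the same route as the paper's: both extract the irredundant AR-triangle expression for $[\ulN]-[\ulM]$ from Proposition~\ref{prop for inequality}, use Proposition~\ref{prop of AR triangles}(1) together with $[\ulU,\ulM]=[\ulU,\ulN]$ to force $\ulU\not\cong\underline{X_i},\underline{X_i}[\pm1]$, invoke $\tau\cong[d]$ and the $2$-periodicity of $\ulU$ to rule out $\ulU\cong\underline{Z_i}$, and finish by Krull--Schmidt. The only difference is organizational: you compute $\mu(\ulM,\ulU)\geq\mu(\ulN,\ulU)$ directly from the coefficients in $\GSR$, whereas the paper assumes $U$ is not a summand of $M$ and derives a contradiction from the resulting direct-sum isomorphism.
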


\begin{proof}
By virtue of Proposition \ref{prop for inequality}, since $\ulM \homo \ulN$, there exist AR triangles $\underline{Z_i} \to \underline{Y _i} \to \underline{X_i} \to \underline{Z_i}[1]$ with 
$$
[\ulN ] - [\ulM ] = \sum _{i \in \II} c_i \cdot ([\underline{X_i} ]+ [\underline{Z_i}] - [\underline{Y _i}])
$$ 
in $\GSR$. 
Thus we have 
$$
\ulN \oplus \bigoplus _{i \in \II} \underline{Y _i}^{c_i} \cong \ulM \oplus \bigoplus _{i \in \II} (\underline{X_i} \oplus \underline{Z_i})^{c_i}. 
$$
Now we assume that $U$ is not a direct summand of $M$. 
Then there exists $i$ such that $\ulU \cong \underline{X_i}$ or $\ulU \cong \underline{Z_i}$ and we can show the inequality
$$
[\ulU, \underline{X_i}] + [\ulU, \underline{Z_i}] - [\ulU, \underline{Y_i}] = \mu (\ulU , \ulX ) + \mu (\ulU, \ulX [-1]) > 0 
$$
holds. 
If $\ulU \cong \underline{X_i}$, it is clear. 
If $\ulU \cong \underline{Z_i}$, since $\ulU$ is periodic of period at most 2, 
$$
\underline{X_i} \cong \tau ^{-1} \ulU \cong \ulU [2-d] \cong \ulU [-d] \cong \ulU \ \text{or} \ \ulU [1] . 
$$
See Remark \ref{Serre functor}. 
Hence we have the inequality above. 
However the inequality never happens since 
$$
[ \ulU , \bigoplus _{i \in \II} \underline{Y_i}^{c_i} ]  =  [\ulU, \bigoplus _{i \in \II} (\underline{X_i} \oplus \underline{Z_i} )^{c_i} ]. 
$$
Therefore, $U$ is a direct summand of $M$. 
\end{proof}

\begin{proposition}\label{proposition of stable hom order}
Let $R$ be of finite representation type which satisfies $(\ast )$ and  $M$ and $N$ be Cohen-Macaulay $R$-modules with $\ulM \homo \ulN$ and $[\ulM ] = [\ulN ]$ in $K_0 (\SCMR)$. 
Suppose that $[\ulU, \ulM ] = [\ulU , \ulN] $ and $\ulU \cong \ulU [2]$ for each indecomposable direct summand $U$ of $N$. 
Then $\ulM \cong \ulN$.    
\end{proposition}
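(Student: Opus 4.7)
The plan is to apply Lemma \ref{lemma of stable hom order} repeatedly and argue by induction on the number of non-free indecomposable direct summands of $N$, counted with multiplicity. This number is finite because $R$ is of finite representation type and $\CMR$ is Krull--Schmidt.

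For the base case, if $\ulN = 0$ (that is, $N$ is free), then $\ulM \homo \ulN$ specialises at $\ulX = \ulM$ to $[\ulM, \ulM] \leq [\ulM, \ulN] = 0$. Hence the identity morphism of $\ulM$ is zero in $\SCMR$, so $\ulM \cong 0 \cong \ulN$.

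For the inductive step, I would pick a non-free indecomposable direct summand $U$ of $N$. The hypotheses of the proposition give $[\ulU, \ulM] = [\ulU, \ulN]$ and $\ulU \cong \ulU[2]$, so Lemma \ref{lemma of stable hom order} applies and $U$ is also a direct summand of $M$. Write $M \cong U \oplus M'$ and $N \cong U \oplus N'$ in $\CMR$. I would then verify that the pair $(\underline{M'}, \underline{N'})$ inherits all the hypotheses: subtracting $[\ulX, \ulU]$ from both sides of $[\ulX, \ulM] \leq [\ulX, \ulN]$ gives $\underline{M'} \homo \underline{N'}$; subtracting $[\ulU]$ from $[\ulM] = [\ulN]$ gives $[\underline{M'}] = [\underline{N'}]$ in $K_0(\SCMR)$; and for each indecomposable direct summand $V$ of $N'$, the module $V$ is still an indecomposable summand of $N$, so $\ulV \cong \ulV[2]$ and the equality $[\ulV, \underline{M'}] = [\ulV, \underline{N'}]$ follows by the same subtraction. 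The induction hypothesis then yields $\underline{M'} \cong \underline{N'}$, and therefore $\ulM \cong \ulN$.

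Since Lemma \ref{lemma of stable hom order} has already performed the genuine work of peeling off one common summand at a time under the periodicity assumption $\ulU \cong \ulU[2]$, no serious obstacle remains. The only subtle point is ensuring that the inductive hypotheses on $(\underline{M'}, \underline{N'})$ really do follow from those on $(\ulM, \ulN)$; as indicated above, all three conditions transfer by a direct bilinear subtraction, so the induction goes through cleanly.
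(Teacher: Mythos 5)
Your proof is correct and takes essentially the same route as the paper: the paper also deduces from Lemma \ref{lemma of stable hom order} that $\ulN$ is a direct summand of $\ulM$ and then uses $\ulM \homo \ulN$ to kill the complement. Your induction merely makes explicit the multiplicity bookkeeping (re-verifying that the hypotheses pass to $(\underline{M'},\underline{N'})$ after peeling off one copy of $U$) that the paper's two-line argument leaves implicit.
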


\begin{proof}
For each indecomposable direct summand $U$ of $N$, it follows from Lemma \ref{lemma of stable hom order} that $\ulU$ is isomorphic to a direct summand of $\ulM$, so that $\ulM \cong \ulM ' \oplus \ulU$. 
Set $\ulN \cong \ulN' \oplus \ulU$. 
Then $\ulN' \homo \ulM '$ since $\ulN' \oplus \ulU \homo \ulM ' \oplus \ulU$. 
Note that $[\ulM' ] = [\ulN' ]$ in $K_0 (\SCMR)$ and $[\ulU', \ulM' ] = [\ulU' , \ulN']$ for each indecomposable direct summand $U'$ of $N'$. 
It also follows that $\ulU'$ is isomorphic to a direct summand of $\ulM'$ by Lemma \ref{lemma of stable hom order}. 
Hence, repeating the procedure, we see that $\ulN$ is isomorphic to  a direct summand of $\ulM$. 
Let $\ulM \cong \ulM'' \oplus \ulN$. 
Since $\ulM \homo \ulN$, $\ulM '' \homo \underline{0}$. Thus $\ulM' \cong \underline{0}$. 
Hence, we have $\ulM \cong \ulN$.  
\end{proof}

\section{Stable degenerations of Cohen-Macaulay modules}\label{stable degenerations of Cohen-Macaulay modules}

In this section, we attempt to describe the stable degenerations of Cohen-Macaulay modules in relation with the stable hom relation. 
First let us recall the definition of stable degenerations of Cohen-Macaulay modules. 
For the detail, we refer the reader to \cite{Y11}. See also \cite{SZ14, Y04}.

\begin{definition}\label{stably degeneration}\cite[Definition 4.1]{Y11}
Let $V=k[t]_{(t)}$ and $K = k(t)$. 
For $\ulM, \ulN \in \SCMR$, we say that $\ulM$ stably degenerates to $\ulN$ if there exists a Cohen-Macaulay module ${\underline{Q}} \in \SCM (R\otimes _k V )$ such that ${\underline{Q[1/t]}} \cong {\underline{M \otimes _k K}}$ in $\SCM (R\otimes _k K)$ and ${\underline{Q \otimes _V V/tV}} \cong \ulN$ in $\SCMR$.
\end{definition}

If a ring is an isolated singularity, there is a nice characterization of stable degenerations.

\begin{theorem}\label{conditions for stable degeneration}\cite[Theorem 5.1, 6.1]{Y11}
Consider the following three conditions for Cohen-Macaulay $R$-modules $M$ and $N$: 
 
\begin{itemize}
\item[(1)] $M \oplus P$ degenerates to $N \oplus Q$ for some free $R$-modules $P$, $Q$. 
\item[(2)] $\ulM$ stably degenerates to $\ulN$. 
\item[(3)] There is a triangle 
$$
\begin{CD}
\ulZ @>>> \ulM \oplus \ulZ \ @>>> \ulN @>>> \ulZ [1] \\ 
\end{CD}
$$ 
in $\SCMR$. 
\end{itemize}
If $R$ is an isolated singularity, then $(2)$ and $(3)$ are equivalent. 
Moreover, if $R$ is Artinian, the conditions $(1), (2)$ and $(3)$ are equivalent. 
\end{theorem}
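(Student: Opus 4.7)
The plan is to prove the three implications separately. The implication $(1) \Rightarrow (2)$ is essentially formal: an ordinary degeneration of $M \oplus P$ to $N \oplus Q$ realized by a family in $\CM(R \otimes_k V)$ directly witnesses $\ulM$ stably degenerating to $\ulN$, since free summands vanish in $\SCMR$. The substantive content therefore lies in $(2) \Leftrightarrow (3)$ for general isolated singularities, and in $(2) \Rightarrow (1)$ for artinian rings.

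For $(3) \Rightarrow (2)$, I would adapt Zwara's construction to the stable Cohen-Macaulay setting. By the characterization of triangles in $\SCMR$ recalled in Section \ref{stable hom order}, the given triangle lifts to a short exact sequence
$$
0 \to Z \xrightarrow{(\alpha,\, \beta,\, \gamma)} M \oplus Z \oplus F \to N \to 0
$$
in $\CMR$, where $F$ is a free $R$-module absorbing the ``up to free summand'' ambiguity in the middle term and $\alpha, \beta, \gamma$ denote the component maps into $M, Z, F$ respectively. Letting $t$ be the uniformizer of $V$, I would form the family
$$
\tilde Q := \mathrm{coker}\bigl( Z \otimes _k V \xrightarrow{(\alpha,\, \beta - t\cdot 1_Z,\, \gamma)} (M \oplus Z \oplus F) \otimes _k V \bigr).
$$
The verifications, in order, are: $V$-flatness of $\tilde Q$ (hence $\tilde Q \in \CM(R \otimes_k V)$); the identity $\tilde Q/t\tilde Q \cong N$, immediate from the original exact sequence; and that the generic fibre $\tilde Q \otimes_V K$ is $M \oplus F$, hence stably $M$, using that $\beta - t\cdot 1_Z$ becomes invertible after passing to the fraction field and can be eliminated on the $Z$-summand.

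For $(2) \Rightarrow (3)$, start with a family $\tilde Q \in \CM(R \otimes_k V)$ and exploit the $V$-flatness exact sequence
$$
0 \to \tilde Q \xrightarrow{t} \tilde Q \to \tilde Q/t\tilde Q \to 0.
$$
The goal is to descend this to a triangle in $\SCMR$. Since $\tilde Q$ is typically not finitely generated as an $R$-module, I would select a finitely generated $R$-submodule $Z \subset \tilde Q$ cofinal under multiplication by $t$ and large enough that pulling back the sequence above yields a sequence of Cohen-Macaulay $R$-modules whose outer terms are stably $Z$ and $N$ and whose middle term is stably $Z \oplus M$. The isolated-singularity hypothesis is essential here: it makes the relevant $\SHom_R$ groups finite-dimensional so that the approximation procedure terminates. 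Finally, for $(2) \Rightarrow (1)$ in the artinian case I would combine $(2) \Rightarrow (3)$, now available, with the classical Zwara--Riedtmann theorem that a short exact sequence $0 \to Z \to Z \oplus M' \to N' \to 0$ of finitely generated modules over an artinian algebra implies that $M'$ degenerates to $N'$; the lift of the triangle then produces a degeneration of $M \oplus P$ to $N \oplus Q$ with $P, Q$ accounting for the free summands introduced in the lift.

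The hardest step will be the descent in $(2) \Rightarrow (3)$: extracting the ``correct'' finitely generated $R$-submodule $Z$ of $\tilde Q$ so that the multiplication-by-$t$ sequence restricts cleanly to Cohen-Macaulay $R$-modules in the prescribed stable form is a delicate approximation argument, and it is precisely here that the isolated-singularity hypothesis is used in an essential way.
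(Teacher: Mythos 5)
First, note that the paper does not actually prove this statement: it is quoted verbatim from \cite[Theorem 5.1, 6.1]{Y11}, and the only commentary the paper adds is Remark \ref{rmks on stable degenerations}. So the comparison here is really with Yoshino's proof. Your overall architecture (Riedtmann--Zwara cokernel construction for one direction, descent of the multiplication-by-$t$ sequence for the other, Zwara's artinian theorem for $(2)\Rightarrow(1)$) is the right skeleton, but there is a genuine gap in $(3)\Rightarrow(2)$, and it is exactly the point that Remark \ref{rmks on stable degenerations} flags. Your claim that $\beta - t\cdot 1_Z$ ``becomes invertible after passing to the fraction field'' is false in general: $(\beta - t)^{-1} = -t^{-1}\sum_{i\ge 0}\beta^i t^{-i}$ is a finite (hence legitimate) expression only when $\beta$ is nilpotent, and over a non-artinian ring $R$ the element $\beta - t$ need not be invertible in $\End_{R\otimes_k K}(Z\otimes_k K)$ --- already $\beta = x\cdot 1_Z$ with $x\in\m$ non-nilpotent gives a non-unit $x-t$ in $R\otimes_k K$. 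Consequently the generic fibre of your $\tilde Q$ need not be stably $M$. The correct statement is that $(3)\Rightarrow(2)$ holds once the induced endomorphism of $\ulZ$ is nilpotent in $\SEnd_R(\ulZ)$; the isolated-singularity hypothesis is used precisely to \emph{remove} that proviso, via the finite-dimensionality of $\SEnd_R(\ulZ)$ (Lemma \ref{stable hom}) and a Fitting-type splitting $\ulZ = \ulZ'\oplus\ulZ''$ into a part where the endomorphism is nilpotent and a part where it is invertible, the latter being cancelled from the triangle (\cite[Lemma 6.5]{Y11}).

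Relatedly, you have misplaced where the hypothesis does its work: you assert that the isolated-singularity assumption is ``essential'' for the descent in $(2)\Rightarrow(3)$, but that implication (together with $(1)\Rightarrow(2)$) holds for any Gorenstein $R$ in Yoshino's Theorem 5.1, as the paper's Remark \ref{rmks on stable degenerations} states; the hypothesis is only needed for $(3)\Rightarrow(2)$, in the manner just described. Two smaller points: $(1)\Rightarrow(2)$ is not purely formal, because the $R\otimes_k V$-module realizing an ordinary degeneration need not be Cohen-Macaulay over $R\otimes_k V$ and must be replaced by a suitable syzygy/Cohen-Macaulay approximation compatible with both fibres; and your $(2)\Rightarrow(3)$ sketch, while pointing in the right direction, leaves the actual descent to a finitely generated $Z$ entirely unproved. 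As written, the proposal would not establish the equivalence of $(2)$ and $(3)$.
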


\begin{remark}\label{rmks on stable degenerations}
In general, the implications $(1) \Rightarrow (2)\Rightarrow (3)$ hold and it is required that the endmorphism of $\ulZ$ in the triangle in (3) is nilpotent. 
However if $R$ is an isolated singularity, we do not need the nilpotency assumption (cf. \cite[Lemma 6.5.]{Y11}). 
It follows from the theorem that $\ulM$ and $\ulN$ give the same class in the Grothendieck group of $\SCMR$ if $\ulM$ stably degenerates to $\ulN$. 
\end{remark}

We state order relations with respect to stable degenerations and triangles.

\begin{definition}\cite[Definition 3.2., 3.3.]{HY13}\label{order}
Let $M$ and $N$ be Cohen-Macaulay $R$-modules. 

\begin{itemize}
\item [(1)] We denote by $\ulM \sto \ulN$ if $\ulN$ is obtained from $\ulM$ by iterative stable degenerations, i.e.  there is a sequence of Cohen-Macaulay $R$-modules $\underline{L _0}, \underline{L_1}, \ldots , \underline{L_r}$  such that  $\ulM \cong \underline{L_0}$, $\ulN \cong \underline{L_r}$  and  each $\underline{L_{i}}$ stably degenerates to  $\underline{L_{i+1}}$  for $0 \leq i < r$. 

\item[(2)] We say that $\ulM$ stably degenerates  by a triangle to $\ulN$, if there is a triangle of the form ${\underline{U}} \to \ulM \to {\underline{V}} \to {\underline{U}}[1]$ in $\SCMR$ such that ${\underline{U}}\oplus {\underline{V}} \cong \ulN$. 
We also denote by $\ulM \trio \ulN$  if $\ulN$ is obtain from $\ulM$ by iterative stable degenerations by a triangle. 
\end{itemize}
\end{definition}

\begin{remark}\label{sto implies homo}
It was shown in \cite{Y11} that the stable degeneration relation is a partial order. 
Moreover if there is a triangle ${\underline{U}} \to \ulM \to {\underline{V}} \to {\underline{U}}[1]$, then we can show that $\ulM$ stably degenerates to $\ulU \oplus \ulV$ (cf. \cite[Remark 3.4. (2)]{HY13}). 
Hence $\ulM \trio \ulN$ implies $\ulM \sto \ulN$. 
It also follows from Theorem \ref{conditions for stable degeneration} that $\ulM \sto \ulN$ induces that $\ulM \homo \ulN$. 
\end{remark}

In this section, we shall show
  
\begin{theorem}\label{Main Theorem}
Let $R$ be a hypersurface which is of finite representation type and satisfies $(\ast )$. 
Then $\ulM \homo \ulN$ if and only if $\ulM \sto \ulN$ for Cohen-Macaulay $R$-modules $M$ and $N$ with $[\ulM ] = [\ulN ]$ in $K_0 (\SCMR )$. 
\end{theorem}

To show this, we use the stable analogue of arguments in \cite{Z99}. 
\medskip

The lemma below is well known for the case in abelian categories (cf. \cite[Lemma 2.6]{Z99}). 
The same statement follows in an arbitrary $k$-linear triangulated category, not necessary $\SCMR$. 
(The author thanks Yuji Yoshino for telling him this argument.)

\begin{lemma}\label{ladder}
Let 
$$
\begin{CD}
\Sigma _1: N_1 @>{\tiny 
\begin{pmatrix}
f_1 \\
v  \\
\end{pmatrix} 
}>> L_1\oplus N_2 \ @>{\tiny (u, g_1)}>> L_2 @>>> N_1 [1]   \\ 
\end{CD}
$$ 
and 
$$
\begin{CD}
\Sigma _2:M_1 @>{\tiny 
\begin{pmatrix}
f_2 \\
w  \\
\end{pmatrix} 
}>> N_1\oplus M_2 \ @>{\tiny (v, g_2)}>> N_2 @>>> M_1[1] \\ 
\end{CD}
$$ be triangles in a $k$-linear triangulated category.  
Then we also have the following triangle. 
$$
M_1 \to L_1 \oplus M_2 \to L_2 \to M_1 [1]. 
$$
\end{lemma}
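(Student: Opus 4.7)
The conceptual picture is that $\Sigma_2$ exhibits $N_2$ as a homotopy pushout of $N_1 \xleftarrow{f_2} M_1 \xrightarrow{w} M_2$, while $\Sigma_1$ exhibits $L_2$ as a homotopy pushout of $L_1 \xleftarrow{f_1} N_1 \xrightarrow{v} N_2$; by pasting of pushouts, $L_2$ should also be the homotopy pushout of $L_1 \xleftarrow{f_1 f_2} M_1 \xrightarrow{w} M_2$, which is exactly the content of the desired triangle. The plan is to realize this heuristic via two applications of the octahedral axiom.

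For the first, I would apply TR4 to the composable pair
$$
M_1 \xrightarrow{\binom{f_2}{w}} N_1 \oplus M_2 \xrightarrow{\bigl(\begin{smallmatrix} f_1 & 0 \\ 0 & 1 \end{smallmatrix}\bigr)} L_1 \oplus M_2,
$$
whose composite is $\binom{f_1 f_2}{w}$. By $\Sigma_2$ the cone of the first arrow is $N_2$, and the cone of the second arrow, being $f_1 \oplus \mathrm{id}_{M_2}$, is $C := \mathrm{cone}(f_1)$. Thus TR4 produces a distinguished triangle $N_2 \to K \to C \to N_2[1]$ where $K$ is the cone of $\binom{f_1 f_2}{w}$. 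For the second, I would apply TR4 to
$$
N_1 \xrightarrow{\binom{1}{v}} N_1 \oplus N_2 \xrightarrow{\bigl(\begin{smallmatrix} f_1 & 0 \\ 0 & 1 \end{smallmatrix}\bigr)} L_1 \oplus N_2,
$$
whose composite is $\binom{f_1}{v}$. The first arrow is a split monomorphism with cone $N_2$, the second again has cone $C$, and the composite has cone $L_2$ by $\Sigma_1$; TR4 then yields $N_2 \to L_2 \to C \to N_2[1]$.

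It remains to identify $K$ with $L_2$. Tracing through the octahedral construction of the connecting maps, both boundaries $C \to N_2[1]$ agree, up to sign, with the composite $C \xrightarrow{\delta} N_1[1] \xrightarrow{v[1]} N_2[1]$, where $\delta$ is the connecting morphism of the cone triangle $N_1 \xrightarrow{f_1} L_1 \to C \to N_1[1]$. The sign discrepancy is harmless because multiplication by $-1$ is an automorphism of every object, so negating the boundary of a distinguished triangle produces an isomorphic triangle; hence $K \cong L_2$, giving the desired triangle $M_1 \to L_1 \oplus M_2 \to L_2 \to M_1[1]$. The main technical obstacle is precisely this final comparison: one must track signs in the two octahedra carefully, or, as a cleaner alternative, choose the sign conventions symmetrically at the outset (for instance by inserting appropriate signs into one of the matrix factorizations) so that the two boundary morphisms coincide on the nose, after which the identification $K \cong L_2$ is immediate from the axiom that the third vertex of a triangle is determined up to (non-canonical) isomorphism by the other two together with the boundary map.
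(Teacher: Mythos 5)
Your argument is correct, but it is organized differently from the paper's. The paper performs a \emph{single} application of the octahedral axiom: it first replaces $\Sigma_2$ by the isomorphic triangle $M_1 \to L_1\oplus N_1\oplus M_2 \to L_1\oplus N_2 \to M_1[1]$, obtained by adding the contractible summand $L_1=L_1$ to $\Sigma_2$ and twisting by the automorphism $\left(\begin{smallmatrix}1&-f_1&0\\0&1&0\\0&0&1\end{smallmatrix}\right)$ of the middle term, so that the split inclusion $N_1\to L_1\oplus N_1\oplus M_2$ followed by the new second map $\left(\begin{smallmatrix}1&f_1&0\\0&v&g_2\end{smallmatrix}\right)$ is exactly $\binom{f_1}{v}$; the octahedron on this composable pair then has all three cones identified in advance ($L_1\oplus M_2$ from the split triangle, $L_2$ from $\Sigma_1$, and $M_1[1]$ from the modified $\Sigma_2$), and the desired triangle drops out with no further identification. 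Your route uses two octahedra, each producing a triangle with third vertex $C=\mathrm{cone}(f_1)$, and then must identify $K=\mathrm{cone}\binom{f_1f_2}{w}$ with $L_2$ by comparing the two boundary maps $C\to N_2[1]$. That comparison is where the real work sits: it uses the refined form of the octahedral axiom in which the boundary of the new triangle is the composite of the two given connecting maps, together with the computation that the cone map of $\binom{1}{v}$ may be taken to be $(-v,1)$ (the source of your sign), and your observation that negating the boundary does not change the isomorphism class of the cone then closes the argument. Both proofs are valid; the paper's preparatory automorphism trick buys the complete avoidance of any a priori unknown vertex, while yours is conceptually transparent (pasting of homotopy pushouts) at the cost of the sign bookkeeping you correctly flag as the delicate point.
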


\begin{proof}
We consider the following triangle associated with $\Sigma _2$: 
$$
\begin{CD}
M_1 @>{\tiny 
\begin{pmatrix}
-f_1 \circ f_2 \\
f_2  \\
w \\
\end{pmatrix} 
}>> L_1 \oplus N_1\oplus M_2 \ @>{\tiny 
\begin{pmatrix}
1 & f_1 & 0 \\
0 & v & g_2 \\
\end{pmatrix}
}>> N_2 @>>> M_1[1] . \\ 
\end{CD}
$$
We remark that the left morphism is given by  
$$
\begin{pmatrix}
-f_1 \circ f_2 \\
f_2  \\
w \\
\end{pmatrix} 
= \begin{pmatrix}
1 & -f_1 & 0 \\
0 & 1 & 0 \\
0 & 0 & 1 \\
\end{pmatrix}
\begin{pmatrix}
0 \\
f_2 \\
w 
\end{pmatrix},
$$  
to make the diagram below: 
$$
\begin{CD}
 N_1 @>{\tiny 
\begin{pmatrix}
f_1 \\
v  \\
\end{pmatrix} 
}>> L_1\oplus N_2 \ @>{\tiny (u, g_1)}>> L_2 @>>> N_1 [1]   \\ 
@| @AA{\tiny 
\begin{pmatrix}
1 & f_1 & 0 \\
0 & v & g_2 \\
\end{pmatrix}}A @ AAA  @| \\ 
N_1 @>>{\tiny \begin{pmatrix} 0 \\ 1 \\ 0 \end{pmatrix}}> L_1 \oplus N_1\oplus M_2 \ @>>> L_1 \oplus M_2 @>>> N_1 [1] \\
@. @AA{\tiny \begin{pmatrix}
-f_1 \circ f_2 \\
f_2  \\
w \\
\end{pmatrix} 
}A @AAA @. @. \\
@. M_1 @= M_1 . @. @.
\end{CD}
$$ 
By the octahedral axiom, we obtain the required triangle.  
\end{proof}

\begin{remark}\label{cal of stable hom order on A_n}
Combining (the abelian version of) Lemma \ref{ladder} with Lemma \ref{filtration}, the dimension of $\SHom$ can be calculated easily from the datum of AR sequences. 
For instance, let $R = k[[x, y ]]/(x^{n+1} + y^2)$ where $n$ is even. 
As stated in \cite[Proposition 5.11]{Y}, the set of ideals of $R$ $\{ \ I_i = (x^i , y)\ | \ 1 \leq i \leq n/2 \ \}$ is a complete list of isomorphic classes of indecomposable non free Cohen-Macaulay $R$-modules. 
The AR sequences are 
$$
0 \to I_{i} \to I_{i-1} \oplus I_{i+1} \to I_{i} \to 0 
$$ 
for $i = 1, \cdots , n/2$ where $I_0 = R$ and $I_{n/2 + 1} \cong I_{n/2}$. 
Then we have
$$
\begin{CD} 
I_1 @>>> I_2 @>>> \cdots @>>> I_{n/2 }@>>> I_{n/2} @>>> \cdots @>>> I_1 @>>> R  \\
@VVV @VVV @. @VVV @VVV @.  @VVV @VVV \\
R @>>> I_1 @>>> \cdots @>>> I_{n/2 - 1} @>>> I_{n/2 + 1} \cong I_{n/2} @>>> \cdots @>>> I_2 @>>> I_1.  \\
\end{CD} 
$$
The diagram shows that 
$$
[\SHom _R (-, {I_1})] = \sum _{i=1} ^{n/2} 2 [S_{I_i}]
$$
in $K_0 (\SmodC)$. Thus we obtain $[\underline{I_i}, \underline{I_1}] = 2$ for $i = 1, \cdots , n/2$. 
\end{remark}

\begin{definition}\label{function}
Let $M$ and $N$ be Cohen-Macaulay $R$-modules. We define a function $\uldeltaMN (-)$ on $\SCMR$ by
$$
\uldeltaMN (-) = [ -, \ulN] - [-, \ulM]. 
$$ 
For a triangle $\ulSigma : \ulL \to \ulM \to \ulN \to \ulL [1]$, we also define a function $\delta _{\ulSigma} (-) $ on $\SCMR$ by 
$$
\delta _{\ulSigma} (-) = [ - , \ulL ] + [ - , \ulN ] - [ - , \ulM ] . 
$$
\end{definition}

\begin{remark}\label{rmk for inequality}
As shown in Proposition \ref{prop for inequality}, for modules $M$, $N \in \CMR$ with $\ulM \homo \ulN$ and $[\ulM ] = [\ulN ]$ in $K_0 (\SCMR)$, we have the irredundant expression 
$$
[\ulN ] - [\ulM ] = \sum _{i \in \II } c_i \cdot ([\underline{X_i} ]+ [\underline{Z_i}] - [\underline{Y _i}]),   
$$
where $\underline{\Sigma _{X_i}}: \underline{Z_i} \to \underline{Y_i} \to \underline{X_i} \to \underline{Z_i}[1]$ are AR triangles. 
On the number $c_i$, we have 
$$
c_i = \uldeltaMN (\underline{X_i} ) /\delta_{\underline{\Sigma _{X_i}}}(\underline{X_i}). 
$$
\end{remark}

In the remaining results of the paper, we assume that each indecomposable Cohen-Macaulay module is periodic of period at most 2. 
Then, by \cite[Corollary 6.2]{Eis80}, we see that $R$ must be a hypersurface since $R$ is complete. 
Therefore, in the rest of the paper, we always assume that $R$ is a hypersurface.

We say that a triangle $\ulZ \xrightarrow{\ulg} \ulY \xrightarrow{\ulf} \ulX \to \ulZ [1]$ is without isomorphisms if $\ulf \in \underline{\mathrm{rad}} (\ulY , \ulX)$ and $\ulg \in \underline{\mathrm{rad}} (\ulZ , \ulY)$.  
Let $\ulZ  \to \ulY  \to \ulX  \to \ulZ [1]$ be any triangle. 
As in the case of a sequence, there is a triangle without isomorphisms $\ulZ ' \to \ulY ' \to \ulX ' \to \ulZ '[1]$ such that $\ulZ \cong \ulZ ' \oplus \ulU$, $\ulY \cong \ulY ' \oplus \ulU \oplus \ulV$ and $\ulX \cong \ulX ' \oplus \ulV$ for some $\ulU$, $\ulV \in \SCMR$. (Cf. \cite[Paragraph (2.7)]{Z99}).

\begin{lemma}\label{key lemma}\cite[Lemma 3.1.]{Z99}
Let $R$ be a hypersurface which is of finite representation type and satisfies $(\ast )$. 
Let $M$ and $N$ be Cohen-Macaulay $R$-modules with $\ulM \homo \ulN$ and $[\ulM ] = [\ulN ]$ in $K_0 (\SCMR )$ and let $\ulSigma :  \ulU \to \ulW \to \ulV \to \ulU [1]$ be a triangle without isomorphisms such that $\delta _{\ulSigma} \leq \uldeltaMN$. 
Then there exists a triangle $\underline{\Phi } :  \ulZ \to \ulY \to \ulV \to \ulZ [1]$ without isomorphisms such that $\delta _{\underline{\Phi }} (\ulY ) = \uldeltaMN (\ulY)$. 
\end{lemma}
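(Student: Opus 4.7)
The plan is to mimic Zwara's approach in \cite[Lemma~3.1]{Z99} in the triangulated setting, using the ladder Lemma \ref{ladder} as the central splicing tool together with the decomposition of $\uldeltaMN$ furnished by Proposition \ref{prop for inequality}. I would argue by induction on the nonnegative integer
\[
n(\ulSigma) := \sum_{\ulY} \bigl(\uldeltaMN(Y) - \delta_{\ulSigma}(Y)\bigr),
\]
where $\ulY$ runs over indecomposable objects of $\SCMR$; this is a finite sum since $R$ is of finite representation type.

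When $n(\ulSigma) = 0$, the given triangle $\ulSigma$ itself satisfies $\delta_{\ulSigma} = \uldeltaMN$, and since its third term is $\ulV$ we may take $\underline{\Phi} := \ulSigma$. Suppose $n(\ulSigma) > 0$ and choose an indecomposable $\ulX_0$ with $\delta_{\ulSigma}(X_0) < \uldeltaMN(X_0)$. By Proposition \ref{prop for inequality} we may write irredundantly
\[
[\ulN] - [\ulM] = \sum_{i=1}^{n} c_i \bigl([\underline{X_i}] + [\underline{Z_i}] - [\underline{Y_i}]\bigr),
\]
with $c_i > 0$ and each $\underline{\Sigma_{X_i}} : \underline{Z_i} \to \underline{Y_i} \to \underline{X_i} \to \underline{Z_i}[1]$ an AR triangle. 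Applying $\SHomR(\ulU, -)$ and combining with Proposition \ref{prop of AR triangles}(1) together with the periodicity $\ulX \cong \ulX[2]$ yields
\[
\uldeltaMN(U) = \sum_{i} c_i \bigl(\mu(\ulU, \underline{X_i}) + \mu(\ulU, \underline{X_i}[-1])\bigr),
\]
so some $\underline{X_i}$ must lie in $\{\ulX_0, \ulX_0[-1]\}$.

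The heart of the induction step is to splice $\ulSigma$ and $\underline{\Sigma_{X_i}}$ via Lemma \ref{ladder}. The strict inequality $\delta_{\ulSigma}(X_0) < \uldeltaMN(X_0)$, combined with the long exact sequence of $\SHomR(\underline{X_i}, -)$ attached to $\ulSigma$, guarantees a non-split-epi map from $\ulV$ to $\underline{X_i}$; lifting it through $\underline{Y_i} \to \underline{X_i}$ via property (ART3) supplies the matching arrow required to invoke the ladder. The output is a new triangle $\ulSigma'$ still ending in $\ulV$ whose $\delta$-function is $\delta_{\ulSigma'} = \delta_{\ulSigma} + \delta_{\underline{\Sigma_{X_i}}}$. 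Passing to the associated triangle $\underline{\Phi'}$ without isomorphisms by stripping trivial summands (as in \cite[Paragraph~(2.7)]{Z99}, using the Krull-Schmidt property of $\SCMR$) preserves the $\delta$-function, and the irredundancy of the decomposition above ensures $\delta_{\underline{\Phi'}} \le \uldeltaMN$ continues to hold while $n(\underline{\Phi'}) < n(\ulSigma)$. The induction hypothesis applied to $\underline{\Phi'}$ then produces the desired $\underline{\Phi}$.

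The main obstacle I anticipate is the splicing step. Invoking Lemma \ref{ladder} requires a specific compatibility between the morphisms of $\ulSigma$ and those of the AR triangle $\underline{\Sigma_{X_i}}$, and this has to be extracted concretely from the hom-functor long exact sequence rather than from abstract nonsense. The role of the periodicity hypothesis $\ulX \cong \ulX[2]$ (together with condition $(\ast)$) is to ensure that $\underline{X_i}$ and $\underline{X_i}[-1]$ appear symmetrically in $\delta_{\underline{\Sigma_{X_i}}}$, which is essential both for matching the expression for $\uldeltaMN$ coming from Proposition \ref{prop for inequality} and for preserving the global inequality $\delta \le \uldeltaMN$ after the splicing and stripping of isomorphisms.
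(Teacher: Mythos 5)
Your overall strategy---iteratively splicing in AR triangles via Lemma \ref{ladder}, using the decomposition of $\uldeltaMN$ from Proposition \ref{prop for inequality}, and terminating by finite representation type---is the same as the paper's, but the splicing step, which you yourself flag as the main obstacle, is set up in a way that does not work. Lemma \ref{ladder} requires the two triangles to share a common object \emph{and} a common morphism in specific positions: the first term $N_1$ of $\Sigma_1$ must be a summand of the middle term of $\Sigma_2$, the third term $N_2$ of $\Sigma_2$ must be a summand of the middle term of $\Sigma_1$, and the component $v\colon N_1\to N_2$ must be literally the same map in both triangles. A non-split map $\ulV\to\underline{X_i}$ lifted through $\underline{Y_i}\to\underline{X_i}$ supplies none of this data, and a construction built on such a map would in any case produce a triangle ending in $\underline{X_i}$, not in $\ulV$. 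The paper instead locates the deficit at an indecomposable direct summand $\underline{W_1}$ of the \emph{middle} term $\ulW$ (possible since $\delta_{\ulSigma}\le\uldeltaMN$ and both sides are additive over summands of $\ulW$), takes the component $\ulf\colon\ulU\to\underline{W_1}$ of the left map of $\ulSigma$---which lies in the radical precisely because $\ulSigma$ is without isomorphisms---and pulls back the AR triangle ending in $\underline{W_1}$ along $\ulf$; the pullback splits by (ART3), giving the auxiliary triangle $\underline{\Psi}\colon\ulU\oplus\tau\underline{W_1}\to\underline{E_{W_1}}\oplus\ulU\to\underline{W_1}\to(\ulU\oplus\tau\underline{W_1})[1]$, which shares with $\ulSigma$ exactly the object $\ulU$, the summand $\underline{W_1}$ of $\ulW$, and the map $\ulf$. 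Only then does Lemma \ref{ladder} apply, yielding $\underline{\Theta}\colon\ulU\oplus\tau\underline{W_1}\to\underline{W_2}\oplus\underline{E_{W_1}}\to\ulV\to(\ulU\oplus\tau\underline{W_1})[1]$ with $\delta_{\underline{\Theta}}=\delta_{\ulSigma}+\delta_{\underline{\Sigma_{W_1}}}$.

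A second, related gap is your induction target. The lemma only asks for equality $\delta_{\underline{\Phi}}(Y)=\uldeltaMN(Y)$ at the middle term, and the paper's procedure stops as soon as $\delta_{\ulSigma}(\ulW)=\uldeltaMN(\ulW)$. Your induction on $n(\ulSigma)=\sum(\uldeltaMN(Y)-\delta_{\ulSigma}(Y))$ over all indecomposables forces you to continue until full equality of functions; when $n(\ulSigma)>0$ but the remaining deficit sits at an $\ulX_0$ that is not a summand of the current middle term, there is no component map to pull back along and no splicing can be performed, so your induction step fails exactly where the paper's stopping criterion kicks in. The quantity that decreases should therefore be measured on the middle term, and the indecomposable carrying the deficit must be chosen among its summands. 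Your remarks on the roles of $(\ast)$ and periodicity (identifying $\delta_{\underline{\Sigma_{X_i}}}$ with $\delta_{\underline{\Sigma_{W_1}}}$ when $\underline{W_1}\cong\underline{X_i}$ or $\underline{X_i}[-1]$) are correct and agree with the paper.
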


\begin{proof}
If $\delta _{\ulSigma}(\ulW ) = \uldeltaMN (\ulW )$, we have nothing to prove. 
Otherwise, we assume that there exists an indecomposable direct summand $W_1$ of $W$ ($= W_1 \oplus W_2$) such that $\delta_{\ulSigma}(\underline{W_1} ) < \uldeltaMN(\underline{W_1 })$.  
Under the assumptions, we have AR triangles $\underline{\Sigma _{X_i}}: \tau \underline{X_i} \to  \underline{E_{X_i}} \to \underline{X_i} \to \tau \underline{X_i} [1]$ such that $\uldeltaMN = \sum _i  c_i \cdot \delta_{\underline{\Sigma _{X_i}}}$. (See Remark \ref{rmk for inequality}.)
Thus there exists $i$ such that $\delta_{\underline{\Sigma _{X_i}}}(\underline{W_1}) > 0$. 
This yields that $\underline{W_1} \cong \underline{X_i} $ or $\underline{W_1} \cong \underline{X_i}[-1]$. 
Since each $\ulX \in \SCMR$ is periodic of period at most 2, by Proposition \ref{prop of AR triangles}, $\delta_{\underline{\Sigma _{X_i}}} = \delta_{\underline{\Sigma _{X_i [-1]}}}$. 
Hence we have $\delta_{\underline{\Sigma _{X_i}}} = \delta_{\underline{\Sigma _{W_1}}}$ for the AR triangle $\underline{\Sigma _{W_1}}$ of $\underline{W_1}$.

Let $\ulf$ be the morphism $\ulU \to \underline{W_1}$ in the triangle $\ulSigma$. 
Take the AR triangle $\underline{\Sigma _{W_1}}$ of $\underline{W_1}$ and construct a pullback diagram:
$$
\begin{CD} 
\tau \underline{W_1} @>>> \underline{E_{W_1}} @>>> \underline{W_1}@>>> \tau \underline{W_1} [1]  \\
@| @AAA @AA{\ulf}A @| \\
\tau \underline{W_1}@>>> \underline{E} @>>> \ulU  @>>> \tau \underline{W_1} [1]. \\
\end{CD} 
$$
Since $\ulSigma$ is without isomorphisms, $\ulf$ is not isomorphism. 
By the property of an AR triangle (ART3), the bottom triangle splits, so that $\underline{E} \cong \ulU \oplus \tau \underline{W_1}$. 
Then we have a new triangle:
$$
\underline{\Psi }:  \ulU \oplus \tau \underline{W_1} \to \underline{E_{W_1}} \oplus \ulU \to \underline{W_1} \to (\ulU \oplus \tau \underline{W_1})[1].
$$ 
Applying Lemma \ref{ladder} to the triangles $\ulSigma$ and $\underline{\Psi}$, we get 
$$
\underline{\Theta }: \ulU \oplus \tau \underline{W_1} \to \underline{W_2 } \oplus \underline{E_{W_1}}  \to \ulV \to (\ulU \oplus \tau \underline{W_1}) [1]. 
$$
It is easy to see that we have the following equality
$$
\delta _{\underline{\Theta }}(\ulX)  = \delta _{\ulSigma} (\ulX) + \delta _{\underline{\Psi}} (\ulX) = \delta _{\ulSigma} (\ulX) + \delta _{\underline{\Sigma _{W_1}}} (\ulX ) = \delta _{\ulSigma} (\ulX) + \delta_{\underline{\Sigma _{X_i}}} (\ulX) 
$$
for each $\ulX \in \SCMR$. 
Therefore $\delta _{\ulSigma} < \delta _{\underline{\Theta }} \leq \uldeltaMN$. 
Repeating this procedure, we obtain the required triangle noting that this process stops since $R$ is of finite representation type.   
\end{proof}

\begin{proof}[Proof of Theorem \ref{Main Theorem}]
As mentioned in Remark \ref{sto implies homo}, $\ulM \sto \ulN$ implies that $\ulM \homo \ulN$.

To show the converse, we assume that $\ulM \homo \ulN$ and $\ulM$ and $\ulN$ have no common non-zero direct summand. 
For each indecomposable Cohen-Macaulay module $X$, we set $r(\ulX) = min \{ \uldeltaMN ' (\ulX ), \mu (\ulX, \ulN) \}$ where $\uldeltaMN ' (\ulX ) := \uldeltaMN (\ulX )/ \delta _{\ARTX}(\ulX )$. 
We consider the following set of isomorphism classes of (non free) indecomposable Cohen-Macaulay modules: 
$$
\FF = \{ \ulX \ | r(\ulX) > 0 \} / \cong .
$$
Now we consider a subset $\GG$ of $\FF$ consisting of modules chosen by the following manner. 

\begin{itemize}
\item If $\ulX \in \FF$ and  $\ulX [-1] \not \in \FF$, $\ulX$ belongs to $\GG$. 
\item Assume that $\ulX$ is such that both $\ulX$ and $\ulX [-1]$ belong to $\FF$. 
\begin{itemize}
\item If $r(\ulX [-1] ) \leq r(\ulX )$, $\ulX$ belongs to $\GG$.
\item If $r(\ulX) < r(\ulX [-1])$, $\ulX [-1]$ belongs to $\GG$. 
\end{itemize}
\end{itemize}

Let $N_1 = \bigoplus _{\ulX \in \GG } X ^{r(\ulX)}$, $N_2 = \bigoplus _{\ulX \in \GG} X ^{\mu (\ulX, \ulN) - r(\ulX)}$ and $N_3 = \bigoplus _{\ulX \not\in \GG} X ^{\mu (\ulX, \ulN)}$. 
Then $\underline{N_1} \oplus \underline{N_2} \oplus \underline{N_3} \cong \ulN$ in $\SCMR$. 
Since $R$ is an isolated singularity, we can take an Auslander-Reiten triangle: 
$$
\ARTX : \tau \ulX  \to \underline{E_X} \to \ulX \to \tau \ulX [1].  
$$
Consider a triangle $\ulSigma$ which is a direct sum of $r(\ulX)$ copies of triangles $\underline{\Sigma _{X}}$ where $X$ runs through all modules in $\GG$: 
$$
\ulSigma  : \bigoplus _{\ulX \in \GG} (\tau \ulX )^{r(\ulX)} \to \bigoplus _{\ulX \in \GG}  \underline{E_X} ^{r(\ulX)} \to \bigoplus _{\ulX \in \GG} \ulX ^{r(\ulX)} \to \bigoplus _{\ulX \in \GG} (\tau \ulX [1])^{r(\ulX)}. 
$$
Here we note that  $\bigoplus _{\ulX \in \GG} \ulX ^{r(\ulX)} \cong \underline{N_1}$. 
First we claim that 
\medskip

\noindent
{\it
Claim 1: $\ulN$ is isomorphic to a direct summand of $\bigoplus _{\ulX \in \GG} (\ulX \oplus \tau \ulX )^{r(\ulX) }$.
}
\medskip

According to Proposition \ref{prop for inequality}, we have the following irredundant expression in $\GSR$: 
\begin{equation}\label{expression 2}
[\ulN ] - [\ulM ] = \sum _{i \in \II} \uldeltaMN ' (\underline{X_i}) \cdot ([\underline{X_i} ]+ [\tau \underline{X_i}] - [\underline{E_{X_i}}]). 
\end{equation} 
Since $M$ and $N$ have no common direct summand, $\ulN$ is isomorphic to a direct summand of $\bigoplus _{i \in \II} (\underline{X_i} \oplus \tau \underline{X_i} )^{\uldeltaMN ' (\underline{X_i} )}$.  
Thus, for each indecomposable direct summand $\ulN '$ of $\ulN$, there exists $i$ such that $\ulN' \cong \underline{X_i}$ or $\tau \underline{X_i}$. 
Since each indecomposable Cohen-Macaulay $R$-module is periodic of period at most 2, $\tau \underline{X_i} \cong \underline{X_i}$ or $\underline{X_i} [-1]$ (Remark \ref{Serre functor}). 
Combining Proposition \ref{prop of AR triangles} with Remark \ref{rmk for inequality}, we also have $\uldeltaMN' (\underline{X_i} ) = \uldeltaMN' (\underline{X_i} [-1])$.  
This yields that $r(\ulN ')>0$. 
Hence $\ulN' \in \FF$.

If $\ulN ' \not\cong \tau \ulN '$ then $\mu (\ulN', \ulN ) \leq \uldeltaMN ' (\ulN' )$, so that $\mu (\ulN ', \ulN) = r(\ulN' )$. 
If $\ulN ' \cong \tau \ulN '$ then $\mu (\ulN ', \ulN ) \leq 2\uldeltaMN ' (\ulN ' )$, so that $\mu (\ulN ', \ulN) \leq 2 r(\ulN' )$.  
Hence, if $\ulN' \in \GG$, $\ulN ^{'\mu (\ulN ', \ulN )}$ is isomorphic to a direct summand of $\bigoplus _{\ulX \in \GG} (\ulX \oplus \tau \ulX )^{r(\ulX) }$. 
Suppose that $\ulN' \not \in \GG$. Then $\ulN' [-1]$ belongs to $\GG$ by the definition of $\GG$.   
In this case $r(\ulN' ) < r(\ulN ' [-1])$. 
As mentioned in Remark \ref{the condition for Grothendieck group}(2), $\underline{\tau} (\ulN' [-1] ) \cong (\ulN' [-1])[-1] \cong \ulN' [-2] \cong \ulN'$. 
Thus $\ulN ^{'\mu (\ulN ', \ulN )}$ is isomorphic to a direct summand of $(\ulN' [-1] \oplus \underline{\tau} \ulN' [-1]) ^{r(\ulN ' [-1])}$. 
Hence it is also isomorphic to a direct summand of $\bigoplus _{\ulX \in \GG} (\ulX \oplus \tau \ulX )^{r(\ulX) }$. 
Consequently, the claim holds.  
\qed
\medskip

For the triangle $\ulSigma$, we have an inequality $\delta _{\ulSigma} (\ulX ) = \sum _{\ulX \in \GG} r(\ulX ) \delta _{\ARTX} (\ulX ) \leq \uldeltaMN (\ulX )$ for each $X \in \CMR$. 
By virtue of Lemma \ref{key lemma}, we have a triangle $\underline{\Phi } : \ulZ \to \ulY \to \underline{N_1} \to \ulZ [1]$ such that $\delta_{\underline{\Sigma }} \leq \delta_{\underline{\phi}} \leq \uldeltaMN$ and $\delta_{\underline{\Phi}}(\ulY ) = \uldeltaMN (\ulY )$. 
Next we claim that 
\medskip

\noindent
{\it
Claim 2: $\delta_{\underline{\Phi }}(\ulU ) = \uldeltaMN (\ulU )$ for each $U \in \CMR$. 
}
\medskip

Note that $\sum _{\ulX \in \GG} r(\ulX ) ([\ulX ] + [\tau \ulX ] - [{\underline{E_X} }] )$ is a constituent of (\ref{expression 2}) since $\uldeltaMN ' (\ulX ) \geq r(\ulX ) > 0$ for each $\ulX \in \GG$. 
Seeing the proof of Lemma \ref{key lemma}, $[\underline{N_1}] + [\ulZ ] - [\ulY ]$ can be also taken as one, say 
$$
[\ulN ] - [\ulM ] = [\underline{N_1}] + [\ulZ ] - [\ulY ] + \sum _{i ' \in \II' } \uldeltaMN ' (X _{i'} ) \cdot ([\underline{X _{i'}} ]+ [\tau \underline{X _{i'}}] - [\underline{E_{X_{i'}}}])
$$
in $\GSR$. 
Since $\delta_{\underline{\Phi}}(\ulY ) = \uldeltaMN (\ulY )$ for each indecomposable direct summand $Y'$ of $Y$, $\underline{Y' } \not\cong \underline{X _{i'}}$ and $\underline{X _{i'}}[-1]$. 
Moreover $\underline{Y' } \not\cong \tau \underline{X _{i'}}$ and $\tau \underline{X _{i'}}[-1]$ since $ \tau \underline{X _{i'}} \cong \underline{X _{i'}}$ or $\underline{X _{i'}} [-1]$.  
This implies that $\ulY$ and $\bigoplus _{i' \in \II '}(\underline{X _{i'}} \oplus \tau \underline{X _{i'}})^{\uldeltaMN ' (\underline{X _{i'}})}$ have no common direct summand.  
We remark that 
$$
\ulN \oplus \ulY \oplus \bigoplus _{i' \in \II '} \underline{E_{X_{i'}}}^{\uldeltaMN ' (\underline{X _{i'}} )} \cong \ulM \oplus \underline{N_1} \oplus \ulZ \oplus \bigoplus _{i' \in \II'} (\underline{X _{i'}} \oplus \tau \underline{X _{i'}})^{\uldeltaMN ' (\underline{X _{i'}} )}. 
$$
By the construction of $\underline{\Phi }$, $\ulZ$ contains $\bigoplus _{\ulX \in \GG} \tau \ulX$ as a direct summand. 
Thus, by {\it Claim 1}, one can show that $\ulN$ is isomorphic to a direct summand of $\underline{N_1 } \oplus \underline{Z}$. 
Hence, $\bigoplus _{i' \in  \II'} (\underline{X _{i'}} \oplus \tau \underline{X _{i'}})^{\uldeltaMN ' (\underline{X _{i'}} )}$ is isomorphic to a direct summand of $\bigoplus _{i' \in  \II'} \underline{E_{X_{i'}}}^{\uldeltaMN ' (\underline{X _{i'}} )}$. 
By using the same arguments in Remark \ref{the condition for Grothendieck group}(2), we have
$$
\bigoplus _{i' \in  \II'} \underline{E_{X_{i'}}}^{\uldeltaMN ' (\underline{X _{i'}} )} \cong \bigoplus _{i' \in  \II'} (\underline{X _{i'}} \oplus \tau \underline{X _{i'}})^{\uldeltaMN ' (\underline{X _{i'}} )}. 
$$
Hence $\ulN \oplus \ulY \cong \ulM \oplus \underline{N_1} \oplus \ulZ$, so that $\delta_{\underline{\Phi }} = \uldeltaMN$. 
\qed
\medskip

Since $0 \leq \uldeltaMN - \delta_{\underline{\Phi }} = \delta _{\ulM \oplus \ulZ, \underline{N_2} \oplus \underline{N_3} \oplus \ulY}$, we see that $\ulM \oplus \ulZ \homo \underline{N_2} \oplus \underline{N_3} \oplus \ulY$. 
Moreover {\it Claim 2} implies that $\delta _{\ulM \oplus \ulZ, \underline{N_2} \oplus \underline{N_3} \oplus \ulY} (\underline{N_2} \oplus \underline{N_3} \oplus \ulY) =  \uldeltaMN (\underline{N_2} \oplus \underline{N_3} \oplus \ulY) - \delta_{\underline{\Phi }}(\underline{N_2} \oplus \underline{N_3} \oplus \ulY) = 0$.
By Proposition \ref{proposition of stable hom order}, we have $\ulM \oplus \ulZ \cong \underline{N_2} \oplus \underline{N_3} \oplus \ulY$. 
Therefore, the triangle $\underline{\Phi } : \ulZ \to \ulY \to \underline{N_1} \to \ulZ [1]$ induces a triangle 
$$
\ulZ \to \underline{N_2} \oplus \underline{N_3} \oplus \ulY \cong \ulM \oplus \ulZ \to \underline{N_1} \oplus \underline{N_2} \oplus \underline{N_3} \cong \ulN \to \ulZ [1].
$$
This makes the stable degeneration $\ulM \sto \ulN$.  
\end{proof}
\begin{remark}
It is known that hypersurfaces which are of finite representation type are simple singularities of type $(A_n )$, $(D_n )$, $(E_6 )$, $(E_7)$ or $(E_8)$. 
The Cohen-Macaulay modules are classified and the AR quivers are also described (cf. \cite{Y, KST07}). 
By the classification theorem, we obtain the list of singularities which satisfy the assumption of Theorem \ref{Main Theorem}:  
\begin{center}
\begin{tabular}{|c|c|}\hline
dimension & singularities \\ \hline
odd & $A_n $ \\ \hline
even &$D_{2n} $, $E_7$, $E_8$\\ \hline
\end{tabular}
\end{center}
\end{remark}

We end this paper by giving the description of stable degenerations of Cohen-Macaulay modules over simple singularities of type ($A_n $).

\begin{example}\label{odd simple singularity}
Let $R = k[[x, y ]]/(x^{n+1} + y^2)$. 
Since the stable hom order coincides with the stable degeneration order, we have the following. 
\begin{itemize}
\item[(1)] If $n$ is an even integer, 
$$
\underline{0} \sto \underline{I_1} \sto \underline{I_2} \sto \cdots \sto \underline{I_{n/2}}.  
$$
\item[(2)] If $n$ is an odd integer, 
$$
\underline{0} \sto \underline{I_1} \sto \underline{I_2} \sto \cdots \sto \underline{I_{(n-1)/2}} \sto \underline{N_+} \oplus \underline{N_-}.  
$$
and
$$
\underline{N_{\pm}} \sto \underline{N_{\pm}} \oplus \underline{I_1} \sto \cdots \sto \underline{N_{\pm}} \oplus \underline{I_{(n-1)/2}} \sto \underline{N_{\pm}} \oplus \underline{N_+} \oplus \underline{N_-} \quad \text{(double sign corresponds)}.    
$$ 
\end{itemize}   
See also Proposition \ref{stable hom order on A_n} and Remark \ref{cal of stable hom order on A_n}.  
\end{example}

On Example \ref{odd simple singularity}, the author also investigate the case that the dimension is even in \cite{HY13}. 
The essential part is the following.

\begin{proposition}\label{even simple singularity}\cite[Corollary 2.12., Proposition 3.10.]{HY13}
Let $R = k[[x]]/(x^{n+1} )$. 
Then the stable degeneration order coincides with the triangle order on $\SCMR$.
\end{proposition}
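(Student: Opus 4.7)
The implication $\ulM \trio \ulN \Rightarrow \ulM \sto \ulN$ is already recorded in Remark \ref{sto implies homo}, so only the converse requires argument. Since $R = k[[x]]/(x^{n+1})$ is a zero-dimensional Gorenstein local $k$-algebra (an artinian hypersurface), Theorem \ref{conditions for stable degeneration} yields the full equivalence of its conditions (1), (2), (3). In particular, $\ulM \sto \ulN$ is equivalent to the existence of free $R$-modules $P, Q$ for which $M \oplus P$ ordinarily degenerates, as $R$-modules, to $N \oplus Q$. Moreover, $R$ is of finite representation type, the non-free indecomposable Cohen-Macaulay $R$-modules being the Nakayama modules $R/(x^i)$ for $1 \leq i \leq n$.

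The plan is to import Zwara's theorem \cite{Z99} that over a representation-finite artinian $k$-algebra the module degeneration order coincides with the extension order. Concretely, if $L$ ordinarily degenerates to $L'$ there exist short exact sequences $0 \to U_i \to L_i \to V_i \to 0$ in $\CMR$ with $L_0 = L$, $L_{i+1} \cong U_i \oplus V_i$ and $L_r \cong L'$. Applied to $L = M \oplus P$ and $L' = N \oplus Q$, each such sequence produces a distinguished triangle $\underline{U_i} \to \underline{L_i} \to \underline{V_i} \to \underline{U_i}[1]$ in $\SCMR$ by the very definition of the triangulated structure recalled in Section \ref{stable hom order}. This is exactly a triangle degeneration in the sense of Definition \ref{order}(2). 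Since the free summands $P, Q$ are zero in $\SCMR$, iterating the triangles produces the desired chain $\ulM \trio \ulN$.

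The main obstacle I anticipate is ensuring that the translation from short exact sequences in $\CMR$ to triangles in $\SCMR$ respects the bookkeeping of iterated degenerations: the intermediate modules $L_i$ may acquire and discard free summands that are invisible stably, so care is needed to confirm that the resulting chain of triangles genuinely realises $\ulM \trio \ulN$ and not $\ulM \trio \ulN \oplus \underline{F}$ for some phantom free piece $\underline{F}$. The artinian hypothesis (hence isolated singularity) makes this manageable via Theorem \ref{conditions for stable degeneration}, which lets one pass freely between the module and stable categories at every step. A more direct alternative, bypassing Zwara's theorem, would mirror the proof of Theorem \ref{main theorem}: using the explicit AR triangles of $R$ (whose AR quiver is the linear $A_n$ diagram with $\tau = \Omega[1]$) together with the octahedral construction of Lemma \ref{ladder}, one builds the required chain of triangle degenerations one AR triangle at a time, with termination guaranteed by finite representation type.
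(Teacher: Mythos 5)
Your proposal is correct and follows essentially the same route as the paper: reduce stable degeneration to ordinary degeneration up to free summands via the artinian case of Theorem \ref{conditions for stable degeneration}, invoke the equality of the degeneration and extension orders for this representation-finite ring (the paper cites \cite[Corollary 2.12]{HY13} where you cite Zwara \cite{Z99} directly, but these amount to the same input), and pass each extension to a triangle in $\SCMR$, where the free summands vanish. The bookkeeping worry you raise is harmless for exactly the reason you give, and the converse direction is indeed just Remark \ref{sto implies homo}.
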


\begin{theorem}\label{simple singularity}
Let $R$ be a simple singularity of type ($A_n$). 
For Cohen-Macaulay $R$-modules $M$ and $N$ with $[\underline{M} ] = [\underline{N} ]$ in $K_0 (\underline{\mathrm{CM}}( R ) )$, the following statements hold.
\begin{itemize}
\item[(1)] If $R$ is of odd dimension then the stable degeneration order coincides with the stable hom order. 

\item[(2)] If $R$ is of even dimension, then the stable degeneration order coincides with the triangle order.   

\end{itemize}
\end{theorem}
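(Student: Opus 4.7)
The plan is to reduce both statements to the low-dimensional base cases already settled in the paper, using Kn\"orrer's periodicity as the transport. Write $R = k[[x_0, \ldots , x_d]]/(x_0^{n+1} + x_1^2 + \cdots + x_d^2)$. Iterating Lemma \ref{periodicity} by stripping off the pairs $x_{2i}^2 + x_{2i+1}^2$ produces a $k$-linear triangle equivalence $F : \SCMR \to \SCM (R_0)$, where the base ring is
$$
R_0 = \begin{cases} k[[x_0, x_1]]/(x_0^{n+1} + x_1^2) & \text{if } d \text{ is odd}, \\ k[[x_0]]/(x_0^{n+1}) & \text{if } d \text{ is even}. \end{cases}
$$
Both $R$ and $R_0$ are isolated singularities of finite CM-representation type, so all of Theorem \ref{conditions for stable degeneration}, Theorem \ref{odd simple singularity} and Proposition \ref{even simple singularity} are available on whichever side we wish to work.

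The next step is to check that each of the three orders at issue is preserved by any $k$-linear equivalence of triangulated categories. The stable degeneration order $\sto$ is preserved by Proposition \ref{prop isolated singularity}. The triangle order $\trio$ is preserved because a $k$-linear triangle equivalence sends triangles to triangles and commutes with direct sums, so the defining data ${\underline{U}} \to \ulM \to {\underline{V}} \to {\underline{U}}[1]$ with $\ulU \oplus \ulV \cong \ulN$ transports verbatim. The stable hom order $\homo$ is preserved because $F$ induces $k$-linear isomorphisms $\SHom _R (\ulX , \ulM ) \cong \SHom _{R_0} (F\ulX , F\ulM )$, and because $F$ is essentially surjective the universal quantifier ``for every $\ulX \in \SCMR$'' matches ``for every $\ulY \in \SCM(R_0)$.''

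Combining these two ingredients, part (1) for odd $d$ becomes the equivalence $\homo \,{=}\, \sto$ on $\SCM(R_0)$ with $R_0 = k[[x_0,x_1]]/(x_0^{n+1}+x_1^2)$, which is Theorem \ref{odd simple singularity}. Part (2) for even $d$ becomes $\trio \,{=}\, \sto$ on $\SCM(R_0)$ with $R_0 = k[[x_0]]/(x_0^{n+1})$, which is Proposition \ref{even simple singularity}. There is no serious obstacle beyond the routine verifications of the three preservation statements; the only point that deserves a sentence of care is the invariance of $\homo$, where one must remark that the Krull--Schmidt bijection between indecomposables induced by $F$ lets us identify the indexing sets of the two hom-order conditions. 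Once this is in place the theorem follows immediately by transport of structure.
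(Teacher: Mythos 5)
Your proposal is correct and follows essentially the same route as the paper: reduce via Kn\"orrer's periodicity to the base cases $\dim R = 1$ (odd dimension) and $\dim R = 0$ (even dimension), transport the stable degeneration order through the triangle equivalence by Proposition \ref{prop isolated singularity}, and conclude from Theorem \ref{odd simple singularity} and Proposition \ref{even simple singularity}. The only difference is that you spell out the (routine but genuinely needed) verifications that the stable hom order and the triangle order are also invariant under a $k$-linear triangle equivalence, which the paper leaves implicit.
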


\begin{proof}
By virtue of Kn\"orrer's periodicity (cf. \cite[Theorem 12.10]{Y}), we have only to deal with the case $\dim R = 1$ to show (1) and the case $\dim R = 0$ to show (2). 
Hence, by Theorem \ref{Main Theorem} and Proposition \ref{even simple singularity}, we obtain the assertion. 
\end{proof}

\section*{Acknowledgments}
The author express his deepest gratitude to Tokuji Araya, Ryo Takahashi and Yuji Yoshino for valuable discussions and helpful comments. 
The author also thank the referee for his/her careful reading and helpful comments that have improved the paper. 

\bibliographystyle{model1-num-names}

\end{document}